\newtheorem{thm}{Theorem}[section]
\newtheorem{lem}[thm]{Lemma}
\newtheorem{cor}[thm]{Corollary}
\newtheorem{remark}[thm]{Remark}
\newcommand{\na}{D}
\newcommand{\de}{\delta}
\newcommand{\om}{\Omega}
\newcommand{\la}{\label}
\newcommand{\bnn}{\begin{eqnarray*}}
\newcommand{\enn}{\end{eqnarray*}}
\newcommand{\ba}{\begin{aligned}}
\newcommand{\ea}{\end{aligned}}
\newcommand{\be}{\begin{equation}}
\newcommand{\ee}{\end{equation}}
\def\pa{\partial}
\def\g{\gamma}
\def\ep{\varepsilon}
\def\a{\alpha}
\def\u{{\bf u}}
\def\x{\bf x}
\def\n{{\bf n}}
\def\V{{\bf v}}
\def\U{{\bf u}}
\def\a{{\bf a}}
\def\b{{\bf b}}
\def\p{{\bf p}}
\def\r{\mathbb{R}}
\def\rr{\mathbb{R}^{n}}
\def\rrr{\mathbb{R}^{n}}
\def\th{\theta}
\def\Ga{\Gamma}
\def\si{c}
\def\lap{\triangle}
\theoremstyle{definition}
\numberwithin{equation}{section}
\begin{document}

\title[Traveling wave for  gradient flow]
{\bf Existence of  traveling waves for vector valued  gradient flows}

\author[X. Chen]{Xinfu Chen}
\address{School of  Mathematics,\& Big Data Laboratory on Financial Security and Behavior(Laboratory of Philosophy and Social Science, Ministry of Education), Southwestern
University of Finance and Economics,  Chengdu 611130,  China.}
\email{chenxinfu@swufe.edu.cn}
 
\author[Z. Liang]{Zhilei Liang}
\address{School of  Mathematics, \& Big Data Laboratory on Financial Security and Behavior(Laboratory of Philosophy and Social Science, Ministry of Education),  Southwestern
University of Finance and Economics,  Chengdu 611130,  China.}
\email{liangzl@swufe.edu.cn}

\begin{abstract}
Allen--Cahn equation  is  a  fundamental continuum model  that describes  phase transitions  in multi-component mixtures.  We prove the existence of traveling waves for  vector valued  Allen--Cahn equations in the context of Ginzburg-Landau  theories; in addition,  we find the largest wave speed and  provide its  bounds from upper and below.  Our method is   based on a  variation technique and can be applied   to system of equations with a gradient flow structure. \end{abstract}

\keywords{Traveling waves; Vector valued  Allen--Cahn equations;   Gradient flow;  Ginzburg-Landau energy; Variation of calculus.}
\subjclass[2020]{35A15, 35A18, 35C07, 35B40, 35R70}	
\date{\today}

\maketitle


\section{Introduction}

 One of the most celebrated microscopic continuum models for
multi--phase transition is the system of Allen--Cahn equations
\be \la{1.1} {\u}_{t}^{\ep}({\x},t) =\ep\lap {\u}^{\ep} -\ep^{-1}\na W({\u}^{\ep}),\quad {\x}\in\om\subseteq\r^{m},\,\, t>0, \ee where $\x$ and $t$ are space
and time variables, $\ep>0$   a small parameter,  ${\u}^{\ep} =(u_1^{\ep},\cdot\cdot\cdot,u_n^{\ep})$  the unknown function, $\lap$ the Laplace operator with respect to the space variables ${\x}$, 
 and $\na W(\u)=\left(\frac{\pa W(\u)}{\pa {u_1}},\cdot\cdot\cdot, \frac{\pa W(\u)}{\pa u_{n}}\right)$  the gradient of a  smooth  potential
 $W:\u\in\rrr\to W(\u)\in\r$.

 The system  \eqref{1.1}  is derived
 from a gradient flow of  the Ginzburg-Landau free energy 
 functional (see \cite{hh})  \bnn
E({\u}^{\ep}) = \int_{\om} \Big\{\frac{\ep}{2} |\nabla {\u}^{\ep}|^2 +\frac{1}{\ep}
W({\u}^{\ep})\Big\}\,{\rm d}\x,\enn where  $|\nabla {\u}|^2=\sum_{i=1}^{n}\sum_{j=1}^{m}|\frac{\pa u_{i}}{\pa x_{j}}|^{2}$, and functions $\ep^{-1}W(\u^{\ep})$ and
$\ep|\nabla{\u}^{\ep}|^2$ are, respectively,  the  potential  and
 kinetic  energy densities. In this setting, a {\it well} is
 a point of a  local minimum of $W$,
 a {\it phase} is represented by
a {\it well} of $W$ and a {\it phase $\b$ domain}  is a set
 in which ${\u}^{\ep}(\cdot,t)\approx \b$. Near an
intersection of two phase regions there is an {\it interfacial
region} where $\u^{\ep}$ changes rapidly but smoothly  from one {\it well} to
another.

 \bigskip

 For smooth ``flat'' initial data,  the diffusion $\ep \lap{\u}^{\ep}$ can be ignored in an initial short time interval, and thus \eqref{1.1}  can be approximated by   $\ep {\u}_{t}^{\ep}\approx  -\na W(\u^{\ep})$. This
 indicates that   \bnn {\u}^*({\x},t):=\lim_{\ep \searrow0}\u^{\ep}({\x},t)={\a}_i \quad\forall \,t>0,
{\x} \in  \Omega_i(t),\ i=1,\cdots,k
\enn  where each $\Omega_i$, $i=1,\cdots,k$, is  {\it phase
$\a_i$ domain}.  Clearly, to determine uniquely ${\u}^*(\cdot,t)$,
it suffices to find $\gamma(t):=\Omega\setminus(\cup_i\;
\Omega_i(t))$. Postulate that \bnn
 \gamma(t) =
\cup_{i\neq j} \gamma_{ij}(t),\quad
\gamma_{ij}(t):=\partial\Omega_i(t)\cap \partial\Omega_j(t).
\enn It is a common belief that near a point ${\bf p}$
 on the interface $\gamma_{ij}(t)$ with unit normal vector
 $\n$,
\bnn\la{appro} {\u}^\ep({\x},t) \approx
{\U}\left(\frac{({\x}-{\bf p})\cdot{\n}-c\,t}{\ep}\right)\enn
 where
$(c,\U)$ is  a traveling wave  connecting ${\a}_i$ and
${\a}_j$ with wave speed $\si$. 
 Here  by  a traveling wave that connects wells $\a$
and $\b$  with speed $\si$ and profile $\U$,  we mean  a solution $(\si,\u)\in \r\times C^{2}(\r;\rr)$
to the  problem 
\be\la{1.4} {\U}''+c\;{\U}' = \na W({\U}) \quad\hbox{in \
}\r, \qquad {\U}(-\infty)=\a,\quad
{\U}(+\infty)=\b.\ee

Traveling  solutions are special solutions of   \eqref{1.1}   which describes  uniformly translating “phase change regions”, moving with speed $\si$. Particularly, 
 if $c$ is the speed of the connection  used
 in \eqref{appro}, one has  the motion law  \bnn V(\gamma(t),p) =
c \qquad\forall p\in \gamma(t), \label{m1}\enn where $V(\gamma(t),p)$ denotes the
velocity of $\gamma(t)$ at  $p$ in the direction normal to
the interface, pointing from $\Omega_i(t)$ to $\Omega_j(t)$.  

\bigskip

Traveling waves are important in applied mathematics    because they are easy to be observed and be accurately  measured in laboratory experiments.  There  
are  standard theories for the existence of traveling waves for the scalar case.   In the celebrated paper \cite{fm},   Fife-McLeod proved the global exponential stability of traveling wave solutions of scalar equation \eqref{1.1}, by  using a  variational structure. 
Chen \cite{chen2}  proved the existence, uniqueness, and exponential stability  of monotone traveling wave solutions  for a large class of evolution equations; the  method used in \cite{chen2} relies heavily on  the comparison principle.  Chen-Guo\cite{cg} studied the existence and asymptotic stability of traveling waves to a discrete version of the quasilinear parabolic equations.  The existence of time-periodic pulsating traveling front has also been proved in the case of a bistable nonlinearity in \cite{abc}.   
 For other related results, we refer to   \cite{bn,ho,lmn,lmn1,mc,Wu} and the references  therein.
In case of  standing waves, i.e., $\si=0$, solutions to \eqref{1.4} have  been constructed in \cite{abc1} in the two dimensional case using complex analysis and regarding $(u,v)$ as  a  complexed valued functions.  In particular, multiple standing waves are constructed for a large class of potential functions.   However,  for  traveling waves ($\si\neq0$)   of semi-linear system of  equations,   the lack of comparison principle (except for cooperative   or similar kind of population growth models) prohibits  a general study of traveling wave problem.   For vector valued  equations \eqref{1.1},  Chen-Qi-Zhang \cite{cqz} established  the existence of traveling waves of a class of reaction–diffusion systems which model the pre-mixed isothermal autocatalytic chemical reaction  between two chemical species.  
 Lucia-Muratov-Novaga \cite{lmn1} studied for \eqref{1.1} the existence of a special class of traveling waves  with exponential decay in the direction of propagation,  
 and  proved the  boundedness, regularity, and some other properties. Chen-Zelati \cite{cz} studied   \eqref{1.1} on an infinite channel and proved the  existence of    traveling wave solutions   which possess a large number of oscillations.  However, little information about the limiting behavior at the ends of the cylinder is known;  see for example,  the papers \cite{fsv,m,mu}. 

\bigskip

Of our particular interest in this paper  is   the existence of traveling waves for  vector valued system \eqref{1.1}.    
For this, we  make the following assumptions:
 
 \begin{enumerate}[label=({\bf A}).]
\item {\it  $W\in C^{2}(\rr;\r);$ there exists  $\b\in \rr$ such that 
 \be\la{b0}\ba 
 W(\b)=0,\quad | \na W(\b)|=0,\quad 
 D^{2} W(\b)>0,\ea\ee
 where $\na W$ and $D^{2} W$ are respectively the gradient and Hessian of $W$; in addition,
 the sets 
 \be\la{b1} \mathbb{D}=\{{\u} \in \rr\,|\,W(\u)<0 \}\quad {\rm and}\quad \Ga:=\pa \mathbb{D} \ee
 are  non-empty and bounded. } 
 \end{enumerate}
 
 We define $\mathbb{E}$  as  the set of all equilibria in $\mathbb{D}$, i.e., 
 \be\la{b2} \mathbb{E}=\{\U\in \rr\,|\, W(\U)<0,\,\,\,\na W(\u)=\textbf{0}\}. \ee

Our main result of  this paper is the following 
\begin{thm}\la{thm}  Assume that the  function $W$ satisfies  ${\bf (A)}$.  Then
there exists $(\si,{\U})\in (0,\infty)\times C^{3}(\r;\rr)$ that satisfies 
 \be\la{6.8}\left\{\ba &\si\u'+\u''=\na W(\u)\quad{\rm in}\,\,\,\r,\\
 &\u(+\infty)=\b,\quad \lim_{x\rightarrow-\infty}W(\U(x))=w\quad {\rm for\,\,some}\,\,w<0,\\
 &  \lim_{x\rightarrow-\infty}|\na W(\U(x))|=0.
 \ea\right.\ee
Consequently,  if $\mathbb{E}=\{\a\}$ is a singleton,  then \be\la{cm} \lim_{x\rightarrow-\infty}\U(x)=\a,\ee  i.e., $(\si,{\U})$ is a solution of  \eqref{1.4}.    \end{thm}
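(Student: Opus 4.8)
The overall strategy is variational: realize the profile equation \eqref{1.4} as the Euler--Lagrange equation of a weighted energy, minimize a suitably normalized version of it at a critical speed, and then read off the asymptotics of the minimizer. For $c>0$ set
\[
\mathcal{F}_c[u]:=\int_{\mathbb{R}}e^{cx}\Big(\tfrac12|u'(x)|^2+W(u(x))\Big)\,dx ,
\]
whose critical points in the weighted Sobolev space $\beta+\mathcal H_c$, where $\mathcal H_c:=\{v:\ \int_{\mathbb R}e^{cx}(|v|^2+|v'|^2)\,dx<\infty\}$, satisfy $cu'+u''=\nabla W(u)$ with $u(+\infty)=\beta$. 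Two structural facts drive everything: the scaling identity $\mathcal{F}_c[u(\cdot-s)]=e^{cs}\mathcal{F}_c[u]$, so that only the sign of the infimum matters; and the identity $\mathcal{F}_c[U]=0$ for every bounded solution $U$ with $U(+\infty)=\beta$, obtained by integrating $e^{cx}W(U)$ by parts against the equation (boundary terms vanish by exponential decay at $+\infty$ and boundedness at $-\infty$). Since $\mathbb D$ and $\beta$ lie in a fixed ball, I would also replace $W$ by a $C^2$ potential $\widetilde W$ agreeing with $W$ on a large ball $B_R$ and nonnegative, coercive outside it; $\widetilde W$ still satisfies ${\bf (A)}$ with the same $\mathbb D$, and at the end a maximum-principle bound on $|u|^2$ shows the solution produced stays inside $B_R$, hence solves the original system.

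Next I would locate the admissible speeds. Impose the normalization that $u$ meets $\mathbb D$, that $W(u(x))\ge0$ for all $x\ge0$, and that $0$ is the last point at which $W(u)$ vanishes; this excludes $u\equiv\beta$ and breaks translation invariance, and any nontrivial bounded solution satisfies it after a translation. Put $\mu(c):=\inf\{\mathcal F_c[u]:\ u-\beta\in\mathcal H_c,\ u\ \text{normalized}\}$. On $[0,\infty)$ the integrand is $\ge 0$ and on $(-\infty,0)$ it is $\ge e^{cx}\min_{\overline{\mathbb D}}W$, so $\mu(c)\ge(\min_{\overline{\mathbb D}}W)/c>-\infty$. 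A test function sitting deep inside $\mathbb D$ over a long interval before climbing out to $\beta$ gives $\mu(c)<0$ for all small $c>0$, while a quantitative ``exit-cost'' estimate — the weighted Dirichlet energy needed to run from $\Gamma$ out to $\beta$ eventually dominates the available potential gain — gives $\mu(c)\ge 0$ once $c\ge\sqrt{2\,|\min_{\overline{\mathbb D}}W|}/\operatorname{dist}(\beta,\Gamma)$. Combined with the scaling law, which yields $\mu(c_0)<0\Rightarrow\mu(c)<0$ for $c\le c_0$ and also for $c$ slightly above $c_0$, this gives $c^{*}:=\sup\{c>0:\mu(c)<0\}\in(0,\infty)$ and $\mu(c^{*})=0$; the same estimates furnish the two-sided bounds on the wave speed announced in the abstract.

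The core step is to produce a nontrivial minimizer at $c=c^{*}$. For a minimizing sequence $u_k$ with $\mathcal F_{c^{*}}[u_k]\to 0$, the splitting $\mathcal F_{c^{*}}[u_k]=\tfrac12\int e^{c^{*}x}|u_k'|^2+\int e^{c^{*}x}W(u_k)$ together with $\int e^{c^{*}x}W(u_k)\ge(\min_{\overline{\mathbb D}}W)/c^{*}$ bounds the weighted Dirichlet energy; coercivity of $\widetilde W$ and the quadratic lower bound $W(u)\ge\kappa|u-\beta|^2$ near $\beta$ then bound $\|u_k-\beta\|_{\mathcal H_{c^{*}}}$. Passing to a subsequence, $u_k\rightharpoonup u^{*}$ in $\mathcal H_{c^{*}}$ and $u_k\to u^{*}$ in $C_{\mathrm{loc}}$; weak lower semicontinuity of the Dirichlet term, convergence of the potential term on half-lines $(-\infty,N]$ (from $C_{\mathrm{loc}}$-convergence and the uniform weighted bounds), and nonnegativity of the tail on $[N,\infty)$ give $\mathcal F_{c^{*}}[u^{*}]\le 0=\mu(c^{*})$, so $u^{*}$ is a minimizer. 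I expect this compactness step — precluding loss of the minimizing mass to $+\infty$ or $-\infty$ and, above all, guaranteeing that the limit is genuinely nontrivial rather than degenerating onto $\{W=0\}$ or onto $\beta$ — to be the main obstacle; it works precisely because $c^{*}$ is the threshold speed (at smaller speeds $\mathcal F_c$ is unbounded below and the analogous sequence slides off to $-\infty$), and verifying this, together with the coercive-confinement and the regularity of $u^{*}$ at the point $x=0$ where the normalization could a priori produce a corner, is the technical heart of the argument.

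Finally I would read off \eqref{6.8}. A local competitor argument (using $\nabla W\ne 0$ on $\Gamma$, since all equilibria of $W$ in $\mathbb D$ lie in the open set $\mathbb D$, not on its boundary) shows the constraint $W(u^{*})\ge0$ is inactive on $(0,\infty)$, so $u^{*}$ satisfies the free Euler--Lagrange equation on all of $\mathbb R$; by ODE regularity $u^{*}\in C^3$, $c^{*}(u^{*})'+(u^{*})''=\nabla W(u^{*})$, $u^{*}(+\infty)=\beta$. The function $E(x):=\tfrac12|(u^{*})'(x)|^2-W(u^{*}(x))$ obeys $E'=-c^{*}|(u^{*})'|^2\le 0$, and since $u^{*}$ is bounded, $E$ has a finite limit at $-\infty$; hence $\int_{\mathbb R}|(u^{*})'|^2<\infty$, so $(u^{*})'\to 0$ and $W(u^{*}(x))\to w:=-\lim_{x\to-\infty}E(x)$. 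Integrating $E'=-c^{*}|(u^{*})'|^2$ over $\mathbb R$ gives $w=-c^{*}\int_{\mathbb R}|(u^{*})'|^2$, which is $<0$ because $u^{*}$ is nonconstant. The $\omega$-limit set of $u^{*}$ as $x\to-\infty$ is compact, connected, and consists of equilibria of $W$ on the level $\{W=w\}$, hence lies in $\mathbb E$ and $|\nabla W(u^{*}(x))|\to 0$; this is exactly \eqref{6.8} with speed $c^{*}>0$ and profile $u^{*}$. If moreover $\mathbb E=\{\alpha\}$, connectedness forces that $\omega$-limit set to equal $\{\alpha\}$, so $u^{*}(-\infty)=\alpha$ and $(c^{*},u^{*})$ solves \eqref{1.4}, establishing \eqref{cm}.
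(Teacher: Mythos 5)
Your proposal follows the same overall route as the paper (the weighted functional $\int e^{cx}(\tfrac12|u'|^2+W(u))$, the normalization through $\Gamma$, a threshold speed at which the infimum vanishes, and reading off the asymptotics), but it has two genuine gaps, both at the places the paper works hardest.

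First, the identification $\mu(c^{*})=0$. The translation identity $\mathcal F_c[u(\cdot-s)]=e^{cs}\mathcal F_c[u]$ preserves the sign of the functional and therefore cannot yield ``$\mu(c_0)<0\Rightarrow\mu(c)<0$ for $c\le c_0$''; that implication needs a separate argument (a dilation $u\mapsto u(\lambda\cdot)$ with $\lambda=c/c_0$ does work, but you do not give it). More seriously, even granting monotonicity, $\mu(c^{*})\le 0$ does not follow from $\mu(c)<0$ for all $c<c^{*}$: as an infimum of functions continuous in $c$, $\mu$ is only upper semicontinuous, so nothing you say excludes a jump up at $c^{*}$. The paper closes exactly this hole by proving a quantitative two-sided Lipschitz bound, $0<\gamma(a)-\gamma(\sigma)\le \frac{a-\sigma}{2\sigma-a}\bigl(\gamma(\sigma)+\frac{m}{\sigma}\bigr)+\frac{a-\sigma}{a\sigma}m$ for $a\in(\sigma,2\sigma)$, derived from identities for the minimizer obtained by \emph{inner} (domain) variations $u\mapsto u(x+t\varphi(x))$ together with the decay rate $e^{-\lambda x}$, $\lambda>\sigma$, at $+\infty$ and the existence of $\lim_{x\to-\infty}W(u(x))$. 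None of this machinery appears in your sketch, and it is needed for minimizers at \emph{every} $\sigma$, not just at $c^{*}$.

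Second, the passage from constrained minimizer to a solution on all of $\mathbb{R}$. Your ``local competitor argument using $\nabla W\neq 0$ on $\Gamma$'' rests on a hypothesis that is not in $(\mathbf A)$: the assumption only controls equilibria with $W<0$, and $W$ may perfectly well have critical points on $\partial\mathbb{D}$. Moreover the normalization $u(0)\in\Gamma$ is an \emph{active} constraint: a minimizer at a general speed has a genuine corner at $0$, since $\gamma(\sigma)=\frac{1}{2\sigma}\bigl(|u'(0+)|^2-|u'(0-)|^2\bigr)$. The paper's resolution — the step you flag as ``the technical heart'' but do not supply — is to renormalize the perturbed competitor $u+t\varphi$ by the translation $z(t)=\max\{z:\,u(z)+t\varphi(z)\in\Gamma\}$; because translation multiplies $J(\sigma^{*},\cdot)$ by the positive factor $e^{-\sigma^{*}z(t)}$ and $J(\sigma^{*},\cdot)\ge 0=J(\sigma^{*},u)$ on $\mathcal A$, one gets $J(\sigma^{*},u+t\varphi)\ge J(\sigma^{*},u)$ for all small $t$ and hence an unconstrained Euler--Lagrange equation. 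This argument works only because $\gamma(\sigma^{*})=0$, which is why the first gap must be closed before the second. On the positive side, your Lyapunov-function treatment of the behavior as $x\to-\infty$ (monotonicity of $\tfrac12|u'|^2-W(u)$, finiteness of $\int|u'|^2$, $w=-c^{*}\int|u'|^2<0$) is clean and, once the full ODE is in hand, simpler than the paper's Lemmas on minimizers; but the paper needs those lemmas in the weaker setting where the equation holds only off the contact set, precisely to feed the continuity argument above.
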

 \begin{remark} The solution of \eqref{6.8} satisfies $$\lim_{x\rightarrow-\infty} {\rm dist}(\u(x),\,\mathbb{E})=0.$$   As an open problem,  we would like to show that  \eqref{cm} holds for some $\a\in \mathbb{E}.$  Clearly, a sufficient condition is that $\mathbb{E}$ consists of isolated points.\end{remark}
  \begin{remark}\la{rr} The condition $D^{2}W(\b)>0$  can be replaced by the weaker assumption that these exists $\de>0$ such that $W(\u)\ge0$ if $|\u-\b|\le \de;$ this assumption implies that ${\rm dist}\,(\b,\Gamma)\ge \de>0;$  see its  usage in Lemma \ref{lem3.2}.\end{remark}

 \begin{remark} Suppose $\widetilde{\b}$ is a local, but not a global, point of minimum of function $W$.  Then, working on $\widetilde{W}(\u)=W(\u)-W(\widetilde{\b})$,  we can find traveling waves connecting $\widetilde{\b}$ with a phase that has lower potential energy.  \end{remark}

\bigskip

  Our  approach  is  based on the method of variation of calculus.  The main idea is as follows:  First, for every $\si>0,$   we find a minimizer for  
  the functional
\be\la{4.1a} \ba J(\si,\,{\U})=&\int_{\r}e^{\si x}\left(\frac{1}{2}|{\U}'|^{2}+W({\U})\right) {\rm d}x \ea\ee
in the  admissible  set  $\mathcal{A}$  defined by \be\la{4.1b} \ba \mathcal{A}:=\left\{{\U}\in H^{1}_{{\rm loc}}(\r;\rr)\,\,
\Big|  \ba\U(+\infty)=\b,\,\,{\U}(0)\in \Gamma,\,\,W({\U})\ge 0\,\,{\rm in}\,\,(0,+\infty)\ea\right\}\ea \ee
where  $\Gamma=\pa \mathbb{D}=\pa \{{\u} \in \rr\,:\,W(\u)<0 \}.$

Next,  we define  the minimum energy function  $\g(\si)$ by  $$\g(\si):=\inf_{\mathcal{A}} J(\si,\cdot)\quad \forall \si>0.$$ As we shall see later,  a minimizer $\u$ of $J(\si,\cdot)$ in $\mathcal{A}$ satisfies $\si\U'+\U''-\na W(\U)=0$ in $\r$ if and only if $\g(\si)=0$. Hence we search for the root of $\g(\si)=0$.  For this,   we show that $\g$ is continuous, strictly increasing and  negative when $\si>0$ is  small, and positive when $\si$ is  large. This tells that there exists a unique  $\si^{*}$ such that $\g(\si^{*})=0$.  If $\U^{*}$ is a minimizer of $J(\si^{*},\cdot)$ in $\mathcal{A},$ then by the method of variation of calculus, we   show that $(\si^{*},\U^{*})$ is a traveling wave. 
This method is quite straightforward and we expect it can be  extended for other systems, e.g., those that contain a drift term, or the traveling wave problem in \cite{lmn1}.
\begin{remark}  We would like to mention that  the paper \cite{lmn1} by  Lucia-Muratov-Novaga also used  the functional similar to that  in \eqref{4.1a} for the existence of traveling waves.  To fix the translation invariance, they used  the  constraint $\int_{\r}e^{\si x}|\u'(x)|^{2}{\rm d}x=1$ instead of the admissible set $\mathcal{A}.$   In comparison with the results in \cite{lmn1}, our assumption $({\bf A})$ is much simpler, and moreover, our analysis  seems more straightforward and elegant. \end{remark}

\begin{remark} 
We remark that the root $\si^{*}$ of $\g(\cdot)=0$  is the largest speed of all traveling waves.  See Theorem \ref{thm4.1} and the example in Section 5.  Similar conclusion also appears in \cite{lmn1}.\end{remark}

   {\bf Notations}: We denote a vector by   $\a=(a_{1},\cdot\cdot\cdot,a_{n})$   and   a vector-valued function by $\u=(u_{1},\cdot\cdot\cdot,u_{n})$.     The characteristic   function of a set $A$ is denoted by $\textbf{1}_{A}$, i.e., $\textbf{1}_{A}(x)=1$ if $x\in A$, and   $\textbf{1}_{A}(x)=0$ if $x\notin A$.    We use $\overline{w}\wedge 0=\min\{\overline{w},\,0\},$  
and  write $f=O(1)g$ as $x\rightarrow +\infty$ provided that  there is a constant $M$ such that $|f(x)|\le M|g(x)|$ for all $x$ sufficiently large.     We denote by  $C^{\infty}(\om;\r^{n})$   the set of all
smooth   functions  from $\om$ to $\rrr$, and     $C_{0}^{\infty}$  denote these functions in $C^{\infty}$ with compact  support.   

\bigskip

The rest of this  paper is arranged as follows.
In Section 2, we first  estimate the size of minimum energy  $\g(\si),$ and then   show the existence of a minimizer  of $J(\si,\cdot)$ in $\mathcal{A}$ for every $\si>0.$   In Section 3, we study the minimizers. In particular,  we study its asymptotic behavior as $x\rightarrow-\infty.$ In Section 4,  we prove that  $\g(\si)$ is continuous and monotone.   In Section 5, we prove Theorem \ref{thm}. Finally, in Section 6  we present an example illustrating   the non-existence and non-uniqueness of traveling waves.

\bigskip

In the sequel, we shall always assume that $W$ satisfies $({\bf A}).$ 

 \section{Variation Method and Energy Estimates}
 In this section, we first  estimate the size of the minimum energy $\g(\si).$ Then we show that, for each $\si>0$, $J(\si,\cdot)$ admits at least a minimizer in $\mathcal{A}$. 
 
  \subsection{Energy Estimates}
 We provide some  estimates on the size of  minimum energy.
 \begin{lem} \la{lem3.2}  For each $\si>0$ define $\g(\si):=\inf_{\mathcal{A}} J(\si,\cdot).$ Then 
  \be\la{5.10}  \frac{\si d^{2}}{2}-\frac{m}{\si} \le  \g(\si)<   
 \frac{ 1}{\si} \left(\left(\frac{|\b-\a|^{2}}{2} +M\right)(e^{\si}-1)-  m e^{-\si}\right),\ee
 where  $\a\in \mathbb{D}$ is  a point satisfying $W(\a)=\inf_{\rr}W(\cdot)$,\,  $m=-W(\a),$  
 \be\la{6.23} M=\max_{x\in [0,1]}W(\a+x(\b-\a))\quad {\rm and}\quad d= \inf_{W(\U)<0}|\U-\b|.\ee
     \end{lem}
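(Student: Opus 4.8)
The plan is to prove the two inequalities of the lemma by separate arguments, the lower bound being essentially a single application of Cauchy--Schwarz and the upper bound requiring an explicit competitor.

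\emph{Lower bound.} I would fix an arbitrary $\U\in\mathcal{A}$ and split the integral defining $J(\si,\U)$ at $x=0$. On $(0,+\infty)$ the admissibility condition $W(\U)\ge0$ lets me discard the potential term; then, writing $\b-\U(0)=\int_{0}^{+\infty}\U'(x)\,dx$ and using Cauchy--Schwarz together with $\int_{0}^{+\infty}e^{-\si x}\,dx=1/\si$, I get
\[
\int_{0}^{+\infty}e^{\si x}\Big(\tfrac12|\U'|^{2}+W(\U)\Big)\,dx\ \ge\ \tfrac12\int_{0}^{+\infty}e^{\si x}|\U'|^{2}\,dx\ \ge\ \tfrac{\si}{2}\,|\U(0)-\b|^{2}\ \ge\ \tfrac{\si}{2}\,d^{2},
\]
where the last step uses that $\U(0)\in\Gamma$ is a limit of points of $\{W<0\}$, so $|\U(0)-\b|\ge d$ (if the first integral is infinite the inequality is trivial). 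On $(-\infty,0)$ I simply discard $\tfrac12|\U'|^{2}\ge0$ and bound $W(\U)\ge\inf_{\rr}W=-m$, which gives at least $-m\int_{-\infty}^{0}e^{\si x}\,dx=-m/\si$. Adding the two pieces and taking the infimum over $\mathcal{A}$ yields $\g(\si)\ge\si d^{2}/2-m/\si$.

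\emph{Upper bound.} Here I would test $J(\si,\cdot)$ on a piecewise competitor. Put $s_{0}:=\sup\{s\in[0,1]:W(\a+s(\b-\a))<0\}$; since $W(\a)<0$ while $W\ge0$ on a ball $B(\b,\de)$ (a consequence of $D^{2}W(\b)>0$, cf.\ Remark~\ref{rr}), one has $0<s_{0}<1$, and by continuity $\p:=\a+s_{0}(\b-\a)$ satisfies $W(\p)=0$, so $\p\in\Gamma$, while $W\ge0$ along the whole segment from $\p$ to $\b$. Let $\U$ equal $\a$ on $(-\infty,-1]$, interpolate linearly from $\a$ to $\p$ on $[-1,0]$, interpolate linearly from $\p$ to $\b$ on $[0,1]$, and equal $\b$ on $[1,+\infty)$. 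Then $\U\in H^{1}_{\rm loc}(\r;\rr)$, $\U(+\infty)=\b$, $\U(0)=\p\in\Gamma$ and $W(\U)\ge0$ on $(0,+\infty)$, so $\U\in\mathcal{A}$; in particular $\mathcal{A}\ne\emptyset$ and $\g(\si)<+\infty$. A direct computation then gives: the piece over $(-\infty,-1]$ contributes $W(\a)\,e^{-\si}/\si=-m\,e^{-\si}/\si$; on $[-1,0]$ and on $[0,1]$ the derivative $\U'$ is the constant vector $s_{0}(\b-\a)$, resp.\ $(1-s_{0})(\b-\a)$, so $\tfrac12|\U'|^{2}<\tfrac12|\b-\a|^{2}$, while $W(\U)$ only takes values $W(\a+s(\b-\a))\le M$; and the piece over $[1,+\infty)$ vanishes since $W(\b)=0$. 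Integrating against $e^{\si x}$ over the two bounded intervals, keeping the strict inequality from the kinetic term, leads to $\g(\si)\le J(\si,\U)<\frac{1}{\si}\big((\tfrac{|\b-\a|^{2}}{2}+M)(e^{\si}-1)-m\,e^{-\si}\big)$. One subtlety: the plain linear interpolation on $[-1,0]$ produces the factor $e^{\si}-e^{-\si}$ in place of $e^{\si}-1$, so to recover the stated constant I would instead route the transition from $\a$ to $\p$ so that $\tfrac12|\U'|^{2}+W(\U)\le0$ along it (possible because $W<0$ there except at the endpoint $\p$), forcing that piece to contribute at most $0$.

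\emph{Main obstacle.} The lower bound is routine. The real work is the upper bound, and the essential feature of the stated estimate is the negative term $-m\,e^{-\si}/\si$: capturing it forces the competitor to sit a full unit of ``time'' at the bottom $\a$ of the well (where $W=-m$), after which one must climb from $\a$ up to a point of $\Gamma$ and cross over to $\b$ while respecting $W(\U)\ge0$ on $(0,+\infty)$ and keeping the weighted energy controlled. The delicate region is near $x=0$, where $W(\U(0))=0$ makes the kinetic and potential densities compete; handling this trade-off — rather than any single inequality — is the main difficulty, and it is exactly what dictates the appearance of $M$, $e^{-\si}$ and $e^{\si}-1$ in the bound.
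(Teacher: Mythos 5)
Your lower bound is exactly the paper's argument (split at $0$, Cauchy--Schwarz against $e^{-\si s}$, $|\U(0)-\b|\ge d$ since $\U(0)\in\Gamma\subset\overline{\mathbb{D}}$, and $W\ge -m$ on the left), and it is correct. The construction of the crossing point $\p=\a+s_{0}(\b-\a)\in\Gamma$ and the verification that your competitor lies in $\mathcal{A}$ are also fine.

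The gap is in the repair you propose for the upper bound. You correctly observe that placing the $\a\to\p$ leg on $[-1,0]$ yields the factor $e^{\si}-e^{-\si}$ rather than $e^{\si}-1$, but the suggested fix --- reparametrizing that leg so that $\tfrac12|\U'|^{2}+W(\U)\le 0$ along it --- does not go through. First, the parenthetical justification is false: $s_{0}$ is defined as a supremum, so $W(\a+s(\b-\a))$ may well be strictly positive on subintervals of $(0,s_{0})$ (the segment can leave $\mathbb{D}$ and re-enter it), and wherever $W>0$ the integrand is forced to be positive. Second, even on a portion where $W<0$, making the integrand nonpositive requires speed $|\U'|\le\sqrt{-2W(\U)}$, and since $W\to 0$ at the endpoint $\p$ the traversal time is $\int d\ell/\sqrt{-2W}$, which need not be $\le 1$ (and can diverge). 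If the transition occupies $[-T,0]$ with $T>1$, the constant piece $\equiv\a$ only starts at $-T$ and contributes $-m e^{-\si T}/\si$, which is strictly weaker than the required $-m e^{-\si}/\si$. So as written you do not obtain the stated bound, only the weaker one with $e^{\si}-e^{-\si}$ (which, incidentally, would still suffice for Corollary \ref{cor1} except for the explicit lower bound on $\si^{*}$ in \eqref{cm1}).

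The missing idea is the translation identity $J(\si,\U(\cdot+x_{0}))=e^{-\si x_{0}}J(\si,\U)$. The paper takes the single profile $\U_{0}$ equal to $\a$ on $(-\infty,0]$, linear on $[0,1]$, and $\b$ on $[1,\infty)$ --- which is \emph{not} in $\mathcal{A}$ --- computes $J(\si,\U_{0})\le -m/\si+\bigl(\tfrac{|\b-\a|^{2}}{2}+M\bigr)\tfrac{e^{\si}-1}{\si}$ directly, and then shifts by the crossing point $x_{0}\in(0,1)$ to land in $\mathcal{A}$; since $e^{-\si x_{0}}\in[e^{-\si},1]$, the negative term is multiplied by at most $e^{-\si}$ and the positive term by at most $1$, giving precisely the stated constants with no need to modify the profile near the crossing.
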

 \begin{remark} The assumption \eqref{b0} implies that there exists some  $\de>0$ such that  $W(\U)>0$ if $0<|\U-\b|<\de.$ Hence, $d$ in \eqref{6.23}  is positive;  see  Remark \ref{rr}.\end{remark}
 
 \begin{proof}  Consider the function
\bnn\la{5.5}\U_{0}(x):=\left\{\ba &\b&{\rm if}\,\,\,&x\ge 1,\\
&\a+x(\b-\a)\quad&{\rm if}\,\,\,&0\le x\le 1,\\
&\a&{\rm if}\,\,\,&x\le 0.\ea\right. \enn
Then, there exists a point $x_{0}\in (0,1)$ such that $W(\U_{0}(x))\ge0$ for all $x\ge x_{0}$ and $\U_{0}(x_{0})\in \Ga$. Set  $\U_{x_{0}}(\cdot)=\U_{0}(x_{0}+\cdot)$. Then,   $\U_{x_{0}}\in \mathcal{A}$.  
A simple computation shows  that 
 \bnn\la{5.7}\ba \g(\si)&\le J(\si,\U_{x_{0}})
 =e^{-\si x_{0}} J(\si,\U_{0})\\
 &=e^{-\si x_{0}}\left(\int_{-\infty}^{0}e^{\si x}W(\a){\rm d}x
 +\int_{0}^{1}e^{\si x}\left(\frac{|\b-\a|^{2}}{2} +W(\a+x(\b-\a))\right){\rm d}x\right)\\
  &< -\frac{m e^{-\si}}{\si}
 + \left(\frac{|\b-\a|^{2}}{2} +M\right)\frac{e^{\si}-1}{\si}.
 \ea\enn
 This proves the second inequality in \eqref{5.10}.
 
 Next, let   $d>0$ be defined  in \eqref{6.23}.   Let $\U\in \mathcal{A}.$ Then $\U(0)\in \Ga=\pa\mathbb{D}.$ Hence,
  \bnn\ba  d^{2} &\le  |{\U}(0)-\b|^{2}=\left|\int^{\infty}_{0}\U'(s){\rm d}s\right|^{2}\le
  \frac{1}{\si}\int^{\infty}_{0}e^{\si s}|\U'(s)|^{2}{\rm d}s.\ea\enn
Consequently,  since $W(\U)\ge0$ in $[0,+\infty)$ and $W(\U)\ge -m$ in $(-\infty,0],$ we get 
\bnn\la{5.9}\ba J(\si,\U) 
\ge \frac{1}{2}\int_{0}^{\infty}e^{\si x}|\U'|^{2}{\rm d}x +\int_{-\infty}^{0}e^{\si x} W(\U){\rm d}x\ge \frac{\si d^{2}}{2}-\frac{m}{\si}. \ea \enn
Taking the infimum of $\U$ over $\mathcal{A}$, we obtain the first inequality in \eqref{5.10}.
This completes the  proof  of Lemma \ref{lem3.2}.
 \end{proof}
 
 From \eqref{5.10} we immediately   obtain the following 
 \begin{cor}\la{cor1}   There exist  positive constants $c_{1}$ and $c_{2}$ such that  $\g(\si)>0$ if $\si>\si_{2}$ and  $\g(\si)<0$ if $0<\si<\si_{1}$.  In particular,  if $\g(\si^{*})=0$, then $\si^{*}$ is bounded by 
 \be\la{cm1} \ln \frac{1+\sqrt{1+\frac{8m}{|\b-\a|^{2}+2M}}}{2}< \si^{*}\le \frac{\sqrt{2m}}{d}.\ee
   \end{cor}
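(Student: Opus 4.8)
The plan is to extract the corollary directly from the two-sided estimate \eqref{5.10} of Lemma \ref{lem3.2}; the only genuine computation will be the solution of a quadratic inequality in the variable $e^{\si}$, so I expect no real difficulty.

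For the large-speed behaviour I would start from the left inequality in \eqref{5.10}, rewritten as $\g(\si)\ge\frac{1}{2\si}\big(\si^{2}d^{2}-2m\big)$. Here $d>0$ by the Remark following Lemma \ref{lem3.2} (which uses \eqref{b0}), and $m=-W(\a)>0$ since $\a\in\mathbb{D}$, so the right-hand side is strictly positive as soon as $\si>\sqrt{2m}/d$. Taking $\si_{2}:=\sqrt{2m}/d$ then gives $\g(\si)>0$ for all $\si>\si_{2}$. Moreover, if $\g(\si^{*})=0$, the same inequality forces $(\si^{*})^{2}d^{2}\le 2m$, i.e. $\si^{*}\le\sqrt{2m}/d$, which is the upper bound in \eqref{cm1}.

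For the small-speed behaviour I would set $A:=\frac{|\b-\a|^{2}}{2}+M>0$ and factor the right-hand side of \eqref{5.10} as $\frac{e^{-\si}}{\si}\big(A\,e^{\si}(e^{\si}-1)-m\big)$. Substituting $y:=e^{\si}>1$, the bracket becomes $A(y^{2}-y)-m=A(y-y_{-})(y-y_{+})$, where $y_{\pm}=\frac{1}{2}\big(1\pm\sqrt{1+4m/A}\big)$ and $4m/A=8m/(|\b-\a|^{2}+2M)$. Since $y_{-}<0<1$ and (using $m>0$) also $y_{+}>1$, the bracket is negative precisely when $y<y_{+}$, i.e. when $\si<\ln y_{+}=:\si_{1}$, with $\si_{1}=\ln\frac{1+\sqrt{1+8m/(|\b-\a|^{2}+2M)}}{2}>0$. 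Hence $\g(\si)<0$ on $(0,\si_{1})$; and if $\g(\si^{*})=0$, the strict inequality in \eqref{5.10} forces the bracket to be positive, so $e^{\si^{*}}>y_{+}$, which is exactly the lower bound in \eqref{cm1}.

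Since this is a corollary in the genuine sense, there is essentially no obstacle; the only step requiring care is the verification that $d$, $m$, $A$ and $y_{+}-1$ are all strictly positive, as this is exactly what makes the thresholds $\si_{1},\si_{2}$ positive and the bounds in \eqref{cm1} non-degenerate. One also notes $\si_{1}\le\si_{2}$ automatically, since no value of $\si$ can make $\g(\si)$ simultaneously negative and positive.
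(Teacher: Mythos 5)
Your proof is correct and is exactly the argument the paper intends: the corollary is stated as an immediate consequence of the two-sided bound \eqref{5.10}, and your derivation (reading off $\si_2=\sqrt{2m}/d$ from the lower bound and solving the quadratic in $e^{\si}$ from the upper bound to get $\si_1=\ln y_+$) is the straightforward way to do it, with the strict/non-strict inequalities in \eqref{cm1} correctly tracked. The only discrepancy is the paper's own typo ($c_1,c_2$ versus $\si_1,\si_2$ in the statement), which you have resolved in the obvious way.
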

 
 \subsection{Existence of a Minimizer}
 
 Now we are ready to prove the following 
\begin{thm} \la{lem3.0a}  For every $\si>0$,  the functional $J(\si,\,\cdot)$ defined in \eqref{4.1a}  admits at least  one minimizer in $\mathcal{A}$ defined in \eqref{4.1b}, i.e., there exists  $\U\in \mathcal{A}$ such that 
\be\la{a23} J(\si,\,\U)=\g(\si):=\inf_{\mathcal{A}}J(\si,\cdot).\ee \end{thm}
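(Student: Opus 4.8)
The plan is to use the direct method of the calculus of variations. Fix $\si>0$; by Lemma~\ref{lem3.2} the number $\g(\si)=\inf_{\mathcal{A}}J(\si,\cdot)$ is finite, so one can choose a minimizing sequence $\{\U_k\}\subset\mathcal{A}$ with $J(\si,\U_k)\to\g(\si)$ and $J(\si,\U_k)\le\g(\si)+1$ for all $k$. The first step is to derive a priori bounds. Since $W\ge -m$ on all of $\rr$ (off $\mathbb{D}$ one has $W\ge0$, and on $\mathbb{D}$ one has $W\ge\inf_{\rr}W=-m$, with $m>0$ as in Lemma~\ref{lem3.2}), and $W(\U_k)\ge0$ on $(0,\infty)$ because $\U_k\in\mathcal{A}$, the integrand of $J$ is bounded below by $-m\,e^{\si x}\textbf{1}_{(-\infty,0)}(x)$, whence
$$\int_{\r}e^{\si x}|\U_k'|^{2}\,{\rm d}x\;\le\;2\Big(J(\si,\U_k)+\tfrac{m}{\si}\Big)\;\le\;C_{0}.$$
Because $\U_k(0)\in\Gamma$ lies in a fixed bounded set and $\U_k(x)=\U_k(0)+\int_{0}^{x}\U_k'$, a Cauchy--Schwarz estimate then shows that $\{\U_k\}$ is bounded in $H^{1}(-N,N)$ for every $N$.

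Next I would pass to a subsequence. By a diagonal argument there are a subsequence (not relabeled) and $\U\in H^{1}_{\rm loc}(\r;\rr)$ with $\U_k'\rightharpoonup\U'$ in $L^{2}(-N,N)$ and $\U_k\to\U$ in $C([-N,N])$ for every $N$ (the latter by compactness of $H^{1}(-N,N)\hookrightarrow C([-N,N])$). One must then check $\U\in\mathcal{A}$. Since $\Gamma=\pa\mathbb{D}$ is closed and $\U_k(0)\to\U(0)$, we get $\U(0)\in\Gamma$; since $W$ is continuous and $W(\U_k)\ge0$ on $(0,\infty)$, the locally uniform convergence gives $W(\U)\ge0$ there; and weak lower semicontinuity of the weighted Dirichlet energy on each $(-N,N)$ followed by $N\to\infty$ gives $\int_{\r}e^{\si x}|\U'|^{2}\le C_0$. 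The point needing a little care is the condition $\U(+\infty)=\b$, which is not preserved by merely local convergence. Here I would use that Cauchy--Schwarz and the uniform bound give, for $0<x<y$,
$$|\U_k(y)-\U_k(x)|\;\le\;\Big(\int_{x}^{y}e^{\si s}|\U_k'|^{2}\,{\rm d}s\Big)^{1/2}\Big(\int_{x}^{\infty}e^{-\si s}\,{\rm d}s\Big)^{1/2}\;\le\;\sqrt{C_0/\si}\;e^{-\si x/2};$$
letting $y\to\infty$ and using $\U_k(+\infty)=\b$ yields $|\U_k(x)-\b|\le\sqrt{C_0/\si}\,e^{-\si x/2}$ for all $x>0$, \emph{uniformly in $k$}. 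Passing to the limit, $|\U(x)-\b|\le\sqrt{C_0/\si}\,e^{-\si x/2}\to0$ as $x\to\infty$, so $\U(+\infty)=\b$ and $\U\in\mathcal{A}$.

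Finally I would prove the lower semicontinuity inequality $J(\si,\U)\le\g(\si)$, which, since $\U\in\mathcal{A}$, forces $J(\si,\U)=\g(\si)$. For $N>0$, split $J(\si,\U_k)$ over $(-\infty,-N)$, $(-N,N)$ and $(N,\infty)$: on $(-\infty,-N)$ the integrand is $\ge -m\,e^{\si x}$, so this piece is $\ge -\tfrac{m}{\si}e^{-\si N}$; on $(-N,N)$ the kinetic part is weakly lower semicontinuous by convexity and $\int_{-N}^{N}e^{\si x}W(\U_k)\to\int_{-N}^{N}e^{\si x}W(\U)$ by uniform convergence and continuity of $W$; on $(N,\infty)$, where $W(\U_k)\ge0$, the potential part is controlled by Fatou and the kinetic part again by weak lower semicontinuity (on each $(N,M)$, then $M\to\infty$). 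Using superadditivity of $\liminf$ this gives
$$\g(\si)=\liminf_{k}J(\si,\U_k)\;\ge\;-\frac{m}{\si}e^{-\si N}+\int_{-N}^{\infty}e^{\si x}\Big(\tfrac12|\U'|^{2}+W(\U)\Big)\,{\rm d}x .$$
To let $N\to\infty$ I would first observe that Fatou applied to the nonnegative functions $e^{\si x}\big(W(\U_k)+m\big)$ on $(-\infty,0)$, together with $\int_{-\infty}^{0}e^{\si x}\big(W(\U_k)+m\big)\le J(\si,\U_k)+\tfrac{m}{\si}$ (the part of $J$ over $(0,\infty)$ being nonnegative), shows $\int_{-\infty}^{0}e^{\si x}\big(W(\U)+m\big)<\infty$; hence $e^{\si x}W(\U)$ is absolutely integrable near $-\infty$ and, with $\int_{\r}e^{\si x}|\U'|^{2}\le C_0$, the tail $\int_{-\infty}^{-N}e^{\si x}\big(\tfrac12|\U'|^{2}+W(\U)\big)$ tends to $0$. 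Thus the right-hand side above tends to $J(\si,\U)$, giving $J(\si,\U)\le\g(\si)$, as desired. I expect the only genuine obstacles to be exactly this pair of end-point issues: recovering $\U(+\infty)=\b$ from local compactness (handled by the $k$-uniform exponential decay) and dealing with the sign-indefinite potential as $x\to-\infty$ in the lower semicontinuity step (handled by the Fatou argument on $W(\U_k)+m$).
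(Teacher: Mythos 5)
Your proposal is correct and follows essentially the same route as the paper: the direct method with the coercivity bound $\int_{\r}e^{\si x}|\U_k'|^2\le 2(\g(\si)+1+m/\si)$, the $k$-uniform exponential decay $|\U_k(x)-\b|\le C e^{-\si x/2}$ to recover $\U(+\infty)=\b$ in the limit, and lower semicontinuity obtained by making the integrand nonnegative via the shift $W+m\mathbf{1}_{(-\infty,0)}$ (your three-region splitting with Fatou is just a slightly more explicit organization of the paper's argument of restricting the nonnegative integrand to $(-R,R)$ and letting $R\to\infty$).
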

\begin{proof}  In view  of \eqref{5.10},    there exists  a sequence $\{{\U}_{n}\}_{n=1}^{\infty}$ in $\mathcal{A}$ such that
\bnn\la{4.3a}  \g(\si)\le J(\si,\,{\U}_{n})\le  \g(\si)+\frac{1}{n}\quad \forall n\ge 1.\enn 

We complete  the proof in two steps. 

{\it Step 1.\,Compactness of  $\{{\U}_{n}\}_{n=1}^{\infty}$.}

Denote by $\textbf{1}_{A}$ the characteristic function of the  set $A$.  Then, for $\u\in \mathcal{A},$ 
 \bnn\ba   \int_{\r} e^{\si x}\frac{|{\U}_{n}'|^{2}}{2} {\rm d}x
 &\le  \int_{\r} e^{\si x}\left(\frac{|{\U}_{n}'|^{2}}{2}+W({\U}_{n})+m\textbf{1}_{(-\infty,0)}\right) {\rm d}x \\
  &= J(\si,\,{\U}_{n})+\frac{m}{\si}\le  \g(\si)+\frac{1}{n}+\frac{m}{\si}.\ea\enn
Consequently, for any $x\in \r,$
 \begin{align}\la{5.3}   |{\U}_{n}(x)-\b|^{2}  &=\left|\int^{\infty}_{x}\U_{n}'(s){\rm d}s\right|^{2}\nonumber\le \left(\int^{\infty}_{x}e^{-\si s}{\rm d}s\right) \left(\int^{\infty}_{x}e^{\si s}|\U_{n}'(s)|^{2}{\rm d}s\right)\nonumber\\
 &\le \frac{2 e^{-\si x}}{\si}\left(\g(c)+\frac{1}{n}+\frac{m}{\si}\right).\end{align}
 This implies  that  
 \bnn\la{4.8a} \sup_{n\ge1}\,\|{\U}_{n}\|_{H^{1}((-R,R))}<\infty\quad \forall\,\,R>0.\enn
Hence, there exists a function  $\U\in H^{1}_{{\rm loc}}(\r;\rr)$ such that,  by replacing   $\{{\U}_{n}\}_{n=1}^{\infty}$ by  a subsequence if necessary,   for every $R>0,$
 as $n\rightarrow\infty,$
 \be\la{5.1} \U_{n}\,-\!\!\!\rightharpoonup \U\,\,\,{\rm weakly\,\, in}\,\,\,H^{1}((-R,R))\quad {\rm and}\quad  \U_{n}  \longrightarrow \U\,\,\,{\rm in}\,\,\,C([-R,R]).\ee
 
 {\it Step 2.\, Existence of a Minimizer in $\mathcal{A}$.}
 
 We show that   $\U$ in \eqref{5.1} is a minimizer of $J(\si,\cdot)$ in $\mathcal{A}$.
 For every $R>0$,
 \bnn\ba &\int_{-R}^{R}e^{\si x}\left(\frac{1}{2}|\U|^{2}+W(\U)+m\textbf{1}_{(-\infty,0)}\right){\rm d} x\\
 &\qquad\le \liminf_{n\rightarrow\infty} \int_{-R}^{R}e^{\si x}\left(\frac{1}{2}|\U_{n}|^{2}+W(\U_{n})+m\textbf{1}_{(-\infty,0)}\right){\rm d} x\\
 &\qquad\le \liminf_{n\rightarrow\infty} \int_{\r}e^{\si x}\left(\frac{1}{2}|\U_{n}|^{2}+W(\U_{n})+m\textbf{1}_{(-\infty,0)}\right){\rm d} x\\
 &\qquad= \g(\si) +\frac{m}{\si},
\ea\enn
which yields, by sending $R\rightarrow\infty$,
\bnn \int_{\r}e^{\si x}\left(\frac{1}{2}|\U|^{2}+W(\U)+m\textbf{1}_{(-\infty,0)}\right){\rm d} x \le \g(\si)+ \frac{m}{\si}.\enn
Hence, $ J(\si,\,\U)\le  \g(\si).$
 
Next we  show  that $\U\in \mathcal{A}.$    In fact,   $W(\U)\ge 0$ in $[0,+\infty)$ because  $\u_{n}\in \mathcal{A}$ implies that $W(\u_{n})\ge0$  in $[0,+\infty)$.   In addition, since $\U_{n}(0)\in \Ga$ and the set $\Ga$ is closed,  it follows that   $\U(0)\in \Ga.$
 Finally,
it follows from  \eqref{5.3} and \eqref{5.1} that 
 \be\la{5.4}\ba   |{\U}(x)-\b|^{2}
 \le \frac{2 e^{-\si x}}{\si}\left(\g(\si)+\frac{m}{\si}\right)\quad\forall x\in \r.\ea\ee
 This implies  $ \lim_{x\rightarrow+\infty}\U(x)=\b.$  Thus, $\U\in \mathcal{A}$. Consequently, $\U$ is a minimizer of $J(\si,\cdot)$  in $\mathcal{A}.$ 
This  completes the  proof of Theorem \ref{lem3.0a}.
 \end{proof}

 We will seek for the existence of  $\si$ such that $\g(\si)=0$. For this purpose, it suffices to show that $\g(\cdot)$ is continuous with respect to $\si.$  To do this, we need certain estimates about the minimizers. 
 
 \section{Certain Properties of Minimizers}
 In this section, we establish certain properties of a minimizer. This will be used to show that $\g(\cdot)$ is strictly monotonic and Lipschitz continuous.
 \subsection{First Variation of the Functional at  Minimizer}
 \begin{lem}\la{lem3.1}  For every  fixed $\si>0$,  a  minimizer  $\u$   of  $J(\si,\cdot)$ in $\mathcal{A}$ satisfies 
 \be\la{6.12}\left\{\ba& \si\U'+\U''=\na W(\U)\quad {\rm in}\,\,(-\infty,\,0],\\
&\U(0)\in \Ga, \quad \U\in C^{3}((-\infty,0];\rr).\ea\right.\ee
  \end{lem}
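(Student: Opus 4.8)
The plan is to derive \eqref{6.12} by the standard technique of first variation, but one must be careful because the admissible set $\mathcal{A}$ imposes the one-sided constraint $W(\u)\ge 0$ on $(0,+\infty)$ and the pointwise constraint $\u(0)\in\Gamma$. The key observation is that on the interval $(-\infty,0)$ there is no such constraint: since $\mathbb{D}$ is open, if $\u(0)\in\Gamma=\partial\mathbb{D}$ then for any test function $\boldsymbol{\varphi}\in C_0^\infty((-\infty,0);\rr)$ the perturbation $\u+t\boldsymbol{\varphi}$ still lies in $\mathcal{A}$ for $|t|$ small (it agrees with $\u$ on $[0,\infty)$, hence $W\ge 0$ there and the endpoint value $\u(0)$ is unchanged). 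Therefore $t\mapsto J(\si,\u+t\boldsymbol{\varphi})$ has a minimum at $t=0$, and differentiating under the integral sign (justified because $\boldsymbol{\varphi}$ has compact support in $(-\infty,0)$, so $e^{\si x}$ is bounded on the support and $W\in C^2$) gives
\be\la{firstvar}
\int_{-\infty}^{0} e^{\si x}\bigl(\u'\cdot\boldsymbol{\varphi}' + \na W(\u)\cdot\boldsymbol{\varphi}\bigr)\,{\rm d}x = 0
\qquad\forall\,\boldsymbol{\varphi}\in C_0^\infty((-\infty,0);\rr).
\ee

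From \eqref{firstvar} I would extract the Euler--Lagrange equation in the weak sense: writing the first term as $-\int (e^{\si x}\u')'\cdot\boldsymbol{\varphi}$ formally, the weak form says $(e^{\si x}\u')' = e^{\si x}\na W(\u)$ in $\mathcal{D}'((-\infty,0))$, i.e. $e^{\si x}(\u''+\si\u') = e^{\si x}\na W(\u)$. Then comes the bootstrap for regularity. Initially $\u\in H^1_{\rm loc}$, so $\u$ is continuous and $\na W(\u)$ is continuous, hence locally bounded on $(-\infty,0]$; the equation $(e^{\si x}\u')' = e^{\si x}\na W(\u)$ then shows $e^{\si x}\u'$ is $C^1$, so $\u'$ is $C^1$, i.e. $\u\in C^2$. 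Now the right-hand side $\na W(\u)$ is $C^1$ (as $W\in C^2$ and $\u\in C^1$), so $\u''\in C^1$, giving $\u\in C^3$ on $(-\infty,0]$. One should note the interval is closed at $0$: the test functions have support strictly inside $(-\infty,0)$, so a priori the equation and the regularity are only interior; to reach $x=0$ one uses that $\u$ is already known to be continuous at $0$ (from $\u\in H^1_{\rm loc}(\r)$ and the trace/Sobolev embedding in one dimension) together with elliptic/ODE regularity up to the boundary — the ODE $\u''=\na W(\u)-\si\u'$ with continuous right-hand side propagates the $C^3$ regularity to the closed interval since $\u$ and $\u'$ extend continuously. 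The condition $\u(0)\in\Gamma$ is simply inherited from membership in $\mathcal{A}$.

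The main obstacle, and the point deserving the most care, is the justification that perturbations supported in $(-\infty,0)$ keep us inside $\mathcal{A}$ — this is where the choice of admissible set pays off, and it is genuinely clean only because the constraint region is $(0,+\infty)$ and the constraint $\u(0)\in\Gamma$ involves the single point $x=0$ which the perturbation does not touch. A secondary technical point is that we are not claiming the full Euler--Lagrange equation holds on all of $\r$ here; on $(0,+\infty)$ the obstacle $W(\u)\ge0$ may be active and only a variational inequality holds — that analysis is deferred. So the proof is essentially: (i) observe admissibility of compactly-supported-in-$(-\infty,0)$ perturbations; (ii) differentiate to get \eqref{firstvar}; (iii) read off the weak ODE; (iv) bootstrap regularity to $C^3$ up to and including $x=0$; (v) recall $\u(0)\in\Gamma$ from $\u\in\mathcal{A}$. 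I expect step (iv), handling regularity cleanly at the endpoint $x=0$, to be the part where one must be slightly careful, though it is routine ODE theory once the weak equation is in hand.
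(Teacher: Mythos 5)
Your argument is correct and follows essentially the same route as the paper: perturbations $\u+t\boldsymbol{\varphi}$ with $\boldsymbol{\varphi}$ compactly supported in $(-\infty,0)$ remain in $\mathcal{A}$ because they do not touch the constraints (all of which live on $[0,+\infty)$), the vanishing first variation yields the weak form of $-(e^{\si x}\u')'+e^{\si x}\na W(\u)=0$ on $(-\infty,0)$, and standard regularity/bootstrapping upgrades this to a classical $C^{3}$ solution up to $x=0$. The paper simply cites standard elliptic estimates for the last step where you spell out the ODE bootstrap; the content is the same.
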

  \begin{proof}  Let $\u$   be a minimizer of  $J(\si,\cdot)$ in $ \mathcal{A}$.    Let $\varphi\in C_{0}^{\infty}((-\infty,0);\rr)$ be an arbitrary function.  Extend $\varphi$ to $[0,+\infty)$ by setting $\varphi(x)={\bf 0}$ for $x>0.$   Then we  have  $\U+t\varphi\in \mathcal{A}$ for all $t\in (-1,1)$.  Thus, 
  the function $t\in (-1,1)\mapsto J(\si,\U+t\varphi)$ attains its minimum at  $t=0.$ Hence,
    \bnn\ba 0&=\left\langle \frac{\de J(\si,\U)}{\de \U},\,\varphi\right\rangle: =\frac{d J(\si,\U+t\varphi)}{dt}\Big|_{t=0}=\int_{\r}e^{\si x}\left(\U'\cdot \varphi'+\na W(\U)\cdot\varphi\right){\rm d}x.
    \ea\enn
    Thus, $\U$ is an $H_{{\rm loc}}^{1}(\r;\rr)$ weak solution of the  equation
    \bnn -\left(e^{\si x}\U'\right)'+ e^{\si x}\na W(\U)=0\quad {\rm in}\,\,\,(-\infty,0).\enn
    By a standard   estimate \cite{evans}, we conclude that $\U$ is  a classical solution of \eqref{6.12}. 
   \end{proof}


 \begin{lem}\la{lem5.1} Let   $\u$ be a     minimizer   of  $J(\si,\cdot)$ in $ \mathcal{A}$.  
 Then,   $\U$ satisfies  
 \be\la{6.13} 
  \si\U'+\U''=\na W(\U)\quad {\rm in}\,\,I(\U):=\{x>0\,:\,\U(x)\notin \Ga\}.
\ee \end{lem}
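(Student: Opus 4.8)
The plan is to localize the variational argument to the open set $I(\U)=\{x>0:\U(x)\notin\Ga\}$ and show that on this set the constraint $W(\U)\ge 0$ is inactive, so that the minimizer is free to be perturbed in every direction. First I would fix an arbitrary point $x_*\in I(\U)$; since $I(\U)$ is relatively open in $(0,+\infty)$, there is an interval $(x_*-r,x_*+r)\subset I(\U)$ with $r>0$. On a possibly smaller interval $(x_*-\rho,x_*+\rho)$ the continuous function $x\mapsto\U(x)$ stays away from $\Ga$, and because $\Ga=\pa\mathbb{D}$ separates $\{W<0\}$ from $\{W>0\}$, either $W(\U(x))>0$ throughout this interval or $W(\U(x))<0$ throughout it. The second alternative is impossible: a minimizer lies in $\mathcal{A}$, so $W(\U)\ge0$ on all of $(0,+\infty)$. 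Hence $W(\U)>0$ on $(x_*-\rho,x_*+\rho)$, with a positive lower bound on any compact subinterval.

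Next I would run the first-variation computation exactly as in the proof of Lemma~\ref{lem3.1}, but with test functions $\varphi\in C_0^\infty((x_*-\rho,x_*+\rho);\rr)$. For such $\varphi$ and $|t|$ small, continuity of $W$ and the strict positivity of $W(\U)$ on $\mathrm{supp}\,\varphi$ guarantee $W(\U+t\varphi)\ge0$ on $\mathrm{supp}\,\varphi$, hence $\U+t\varphi\in\mathcal{A}$ (the conditions $\U(+\infty)=\b$, $\U(0)\in\Ga$, and $W(\U)\ge0$ on the rest of $(0,\infty)$ are untouched since $\varphi$ vanishes there). Therefore $t\mapsto J(\si,\U+t\varphi)$ has a minimum at $t=0$, giving
\[
0=\int_{\r}e^{\si x}\bigl(\U'\cdot\varphi'+\na W(\U)\cdot\varphi\bigr)\,{\rm d}x
=\int_{x_*-\rho}^{x_*+\rho}e^{\si x}\bigl(\U'\cdot\varphi'+\na W(\U)\cdot\varphi\bigr)\,{\rm d}x .
\]
Thus $\U$ is a weak solution of $-(e^{\si x}\U')'+e^{\si x}\na W(\U)=0$ on $(x_*-\rho,x_*+\rho)$, and the elliptic regularity bootstrap from \cite{evans} (using $W\in C^2$) upgrades this to a classical solution, yielding $\si\U'+\U''=\na W(\U)$ at $x_*$. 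Since $x_*\in I(\U)$ was arbitrary, the equation holds throughout $I(\U)$.

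The main obstacle — really the only point requiring care — is justifying that an admissible perturbation stays admissible, i.e.\ that the obstacle constraint $W(\U)\ge0$ is not violated by $\U+t\varphi$ for small $t$. This is why the argument must be carried out on a neighborhood on which $W(\U)$ is bounded below by a positive constant; on such a set the uniform continuity of $W$ on a bounded region containing the relevant values gives a uniform bound $\sup_x|W(\U(x)+t\varphi(x))-W(\U(x))|\le C|t|\,\|\varphi\|_\infty\to0$. (One should note $\U$ is bounded on $\mathrm{supp}\,\varphi$ by the continuity already established, so all function values stay in a fixed compact set where this modulus of continuity is available.) Everything else is the standard Euler–Lagrange plus interior-regularity routine already used for Lemma~\ref{lem3.1}; I would simply refer back to that proof rather than repeat it.
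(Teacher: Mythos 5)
Your overall strategy is exactly the paper's: the paper writes $I(\U)$ as a countable union of open intervals $(a_i,b_i)$ and performs the first variation with test functions compactly supported in each component, observing that $\U+t\varphi$ remains in $\mathcal{A}$ for small $|t|$; your pointwise localization is the same argument. The paper, in fact, does not spell out why $\U+t\varphi\in\mathcal{A}$, so your attempt to justify this is welcome — but the justification you give is not quite right. You claim that since $\U(x)\notin\Ga=\pa\mathbb{D}$, either $W(\U)>0$ or $W(\U)<0$ throughout a neighborhood, and conclude strict positivity with a positive lower bound. This dichotomy is false: the set $\{W=0\}$ is in general strictly larger than $\Ga=\pa\{W<0\}$. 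For instance $W(\b)=0$ while $\b\notin\Ga$ (indeed ${\rm dist}(\b,\Ga)\ge d>0$), so a point $x$ with $\U(x)=\b$, or with $\U(x)$ equal to any other zero of $W$ lying off $\Ga$, belongs to $I(\U)$ yet gives $W(\U(x))=0$; your subsequent step "strict positivity of $W(\U)$ on ${\rm supp}\,\varphi$ guarantees $W(\U+t\varphi)\ge0$ for small $t$" then has no premise to stand on.

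The correct (and equally short) justification avoids $W$ altogether: for $x\in{\rm supp}\,\varphi$, the value $\U(x)$ lies outside the closed set $\overline{\mathbb{D}}=\mathbb{D}\cup\Ga$ (it is not in $\Ga$ by definition of $I(\U)$, and not in $\mathbb{D}$ because $\U\in\mathcal{A}$ forces $W(\U)\ge0$ on $(0,\infty)$). By continuity and compactness of ${\rm supp}\,\varphi$, ${\rm dist}(\U(x),\overline{\mathbb{D}})\ge\delta>0$ there, so for $|t|<\delta/\|\varphi\|_{\infty}$ the perturbed curve still avoids $\overline{\mathbb{D}}$, hence $W(\U+t\varphi)\ge0$ on ${\rm supp}\,\varphi$ and $\U+t\varphi\in\mathcal{A}$. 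With that repair, the rest of your argument (Euler--Lagrange equation in the weak sense, then interior regularity as in Lemma~\ref{lem3.1}) is correct and matches the paper.
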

   \begin{proof}  Since $\U$ is continuous and $\Ga$ is closed, we see that the set $I(\U)$ is open. 
We    express $I=\cup_{i=1}^{\infty}(a_{i},b_{i}).$  Let $\varphi\in C_{0}^{\infty}((a_{i},b_{i});\rr)$ be an arbitrary function. Set $\varphi(x)={\bf 0}$ if $x\notin (a_{i},b_{i}).$  We can find a  small number $t_{0}>0$  to satisfy 
\bnn \U+t\varphi\in \mathcal{A}\quad \forall  t\in (-t_{0},t_{0}).\enn
Hence, by the first variation argument, we get 
$\si\U'+\U''=\na W(\U)$ in  $(a_{i},b_{i})$
for every $ i=1,2,\cdot\cdot\cdot.$ The proof is completed.
   \end{proof}

\begin{lem} \la{lem7.1} Let $\u$ be a minimizer of $J(c,\cdot)$ in $\mathcal{A}$.   For any $\si>0$, there exists a constant $\lambda>\si$ such that  
any minimizer $\U$ of $J(\si,\cdot)$ in $\mathcal{A}$ satisfies 
  \be\la{6.24}  |\U'(x)|+|\U''(x)|+|\u(x)-\b| = O(1) e^{-\lambda x}\quad  {\rm as}\quad x\rightarrow+\infty.\ee
  Consequently,     $|W(\U(x))|=O(1)|\u(x)-\b|^{2}=O(1) e^{-2\lambda x}$ as $x\rightarrow +\infty.$
   \end{lem}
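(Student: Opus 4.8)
The plan is to establish the exponential decay in \eqref{6.24} by combining the decay of $|\U(x)-\b|$ already obtained in \eqref{5.4} with a bootstrap that uses the ODE \eqref{6.13} near $+\infty$. First I would observe that by \eqref{5.4} we already know $|\U(x)-\b|^{2}\le C e^{-\si x}$, hence $\U(x)\to\b$ and, by assumption \eqref{b0}, $D^{2}W(\b)>0$, so there is $\mu>0$ and a neighbourhood of $\b$ on which $W(\u)\ge \mu|\u-\b|^{2}>0$. In particular $W(\U(x))>0$ for all $x$ large, so $\U(x)\notin\Gamma$ for $x$ large, which means a half-line $(x_{0},+\infty)$ is contained in the open set $I(\U)$ from Lemma \ref{lem5.1}; therefore $\U$ solves the linearizable ODE $\si\U'+\U''=\na W(\U)$ on $(x_{0},+\infty)$.

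Next I would set up the linearization. Writing $\v=\U-\b$ and $\na W(\U)=D^{2}W(\b)\v+O(|\v|^{2})$, the equation becomes $\v''+\si\v'-A\v=g(x)$ with $A=D^{2}W(\b)$ symmetric positive definite and $|g(x)|=O(|\v(x)|^{2})=O(e^{-\si x})$. The characteristic roots of $r^{2}+\si r - \kappa=0$ (for each eigenvalue $\kappa>0$ of $A$) are $r_{\pm}=\tfrac{-\si\pm\sqrt{\si^{2}+4\kappa}}{2}$; the relevant one is the negative root $r_{-}$, and since $\sqrt{\si^{2}+4\kappa}>\si$ we have $r_{-}<-\si$, i.e. the ``intrinsic'' decay rate exceeds $\si$. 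The standard way to exploit this for a solution that is merely known to be bounded (indeed $o(1)$) is a Gronwall / integral-equation argument: on $(x_{0},\infty)$ one represents $\v$ via the variation-of-constants formula against the Green's function built from $r_{\pm}$, discards the growing mode using the a priori bound $\v=O(e^{-\si x})$, and reads off that $|\v(x)|\le Ce^{-\lambda x}$ for any $\lambda<\min_{\kappa}|r_{-}(\kappa)|$; choosing $\lambda$ slightly larger than $\si$ (possible since $\min_\kappa |r_-| > \si$) gives the claim, and the decay of $\v'=\U'$ follows from differentiating the integral representation, while $\v''=\U''$ then follows from the ODE itself. The final sentence of the lemma is immediate: $|W(\U)|\le C|\U-\b|^{2}$ near $\b$ by Taylor's theorem since $W(\b)=0$ and $\na W(\b)=0$, so $|W(\U(x))|=O(e^{-2\lambda x})$.

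An alternative, perhaps cleaner, route avoids the Green's function: multiply the linearized equation by a suitable exponential weight and run a differential-inequality argument for $E(x)=e^{2\lambda x}(|\v'|^{2}+|\v|^{2})$, showing $E'\le 0$ for $x$ large once $\lambda<|r_-|$ uniformly; the point is that the quadratic form $\v'\cdot\v''+\si|\v'|^2-\v'\cdot A\v$ together with the weight produces a sign-definite leading term because $\lambda^{2}+\si\lambda<\kappa$ for all eigenvalues $\kappa$ when $\lambda$ is close enough to $\si$. I expect the main obstacle to be the bookkeeping that makes the passage from the ``scalar'' characteristic-root heuristic to the genuinely vector-valued, non-constant-coefficient equation rigorous — in particular handling the nonlinear remainder $g(x)=O(e^{-\si x})$, which decays exactly at the borderline rate $\si$ and therefore cannot be absorbed trivially; one must either diagonalize $A$ and treat each component, or carry the matrix exponential of the first-order system $\tfrac{d}{dx}(\v,\v')=M(\v,\v')+(\mathbf 0,g)$ and use the spectral gap of $M$ (whose eigenvalues are the $r_\pm$) to get the weighted estimate. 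Once the spectral gap $\lambda>\si$ is identified, the decay of $\U'$ and $\U''$ and the quadratic bound on $W(\U)$ are routine consequences, so I would present the gap estimate carefully and treat the rest briefly.
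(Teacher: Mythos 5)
Your proposal is correct and follows essentially the same route as the paper, which simply localizes the equation $\si\U'+\U''=\na W(\U)$ to a half-line $(x_*,+\infty)$ (using \eqref{5.4} and the positive distance from $\b$ to $\Gamma$), linearizes at $\b$ with $\mu>0$ the smallest eigenvalue of $D^{2}W(\b)$, and invokes standard ODE asymptotics to get decay at any rate $\lambda<\Lambda=\frac{\si+\sqrt{\si^{2}+4\mu}}{2}>\si$ --- exactly your spectral-gap computation. The only slip is cosmetic: \eqref{5.4} gives the a priori bound $|\U-\b|=O(e^{-\si x/2})$, not $O(e^{-\si x})$, but since only $\U-\b=o(1)$ is needed to discard the growing mode and one further bootstrap pass then yields a rate $\min\{2\si,\Lambda\}-\varepsilon>\si$, your argument goes through unchanged.
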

   \begin{proof}  Define $ x_{*}:=\max\{x\ge0\,|\,\U(x)\in \Ga\}$.
   By  \eqref{5.4},  $ x_{*}$ is finite.  By  \eqref{6.13},  \bnn\la{6.25}  \si\U'+\U''=\na W(\U)\quad {\rm in}\,\,(x_{*},+\infty),\quad \lim_{x\rightarrow+\infty}{\U}(x)=\b.\enn
  Let $\mu>0$ be the smallest eigenvalue of $D^{2}W(\b).$ By a  standard theory of   ordinary differential equations (e.g., \cite{hart}), we can show that \eqref{6.24} holds 
for every  $\lambda\in (0,\Lambda)$ (or $\lambda=\Lambda$ if $W\in C^{2+\sigma}$ for some $\sigma>0$), where $\Lambda$ is the positive root of the characteristic equation $\Lambda^{2}-\si \Lambda -\mu=0$, i.e., $$\Lambda=\frac{\si+\sqrt{\si^{2}+4\mu}}{2}>\si.$$  
  The desired  assertion thus follows. 
 \end{proof}
 
\subsection{The Asymptotic  Behavior of a Minimizer as $x\rightarrow-\infty$}
 
We shall  prove  the existence of the  limit of  $W(\U(x))$ as $x\rightarrow-\infty.$
 For this we define 
\be\ba\la{4.1}\underline{w}= \liminf_{x\rightarrow-\infty} W(\U(x))\quad{\rm and }\quad \overline{w}=\limsup_{x\rightarrow-\infty} W(\U(x)).\ea\ee
\begin{lem}\la{lem4.1}  Let $\u$ be a minimizer of $J(c,\cdot)$ in $\mathcal{A}$.  Define  $ \underline{w}$  as in \eqref{4.1}. Then, \be\la{9.1} W(\U(x))\ge \underline{w}\quad \forall x\le 0.\ee \end{lem}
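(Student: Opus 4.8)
The plan is to argue by contradiction: suppose there is a point $x_0\le 0$ with $W(\U(x_0))<\underline{w}$. Since $\U$ is continuous and $W$ is continuous, and since $W(\U(x))\to w$-values with liminf $\underline w$ as $x\to-\infty$ while $W(\U(0))\ge 0\ge\underline w$ (recall $\U(0)\in\Gamma$, so $W(\U(0))=0$), one can find an interval around $x_0$ on which $W(\U)$ is strictly below $\underline w$, and by choosing the level $\ell\in(W(\U(x_0)),\underline w)$ appropriately, the connected component of $\{x: W(\U(x))<\ell\}$ containing $x_0$ is a bounded open interval $(a,b)\subset(-\infty,0)$ with $W(\U(a))=W(\U(b))=\ell$. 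The idea is then to replace $\U$ on $[a,b]$ by a cheaper competitor that still lies in $\mathcal{A}$, contradicting minimality.

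The natural competitor is a \emph{reparametrization/translation surgery}: on $(a,b)$ the minimizer solves the Euler--Lagrange equation $\si\U'+\U''=\na W(\U)$ by Lemma \ref{lem5.1} (since $(a,b)\subset(-\infty,0)$, or one can use Lemma \ref{lem3.1} directly), so $\U\big|_{[a,b]}$ is a smooth curve joining two points on the level set $\{W=\ell\}$ while dipping below $\ell$. I would compare $J(\si,\U)$ with the functional evaluated on the modification $\widetilde\U$ obtained by ``cutting out'' the excursion — more precisely, since the weight $e^{\si x}$ is increasing, pushing the low-energy (indeed, on $(a,b)$ we'd want to show the relevant contribution is not what helps) part is delicate; instead the cleanest route is: define $\widetilde\U(x)=\U(x)$ for $x\notin(a,b)$ and on $[a,b]$ replace $\U$ by the affine interpolation between $\U(a)$ and $\U(b)$, reparametrized. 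Then show
\[
\int_a^b e^{\si x}\Big(\tfrac12|\widetilde\U'|^2+W(\widetilde\U)\Big)\,dx \;<\; \int_a^b e^{\si x}\Big(\tfrac12|\U'|^2+W(\U)\Big)\,dx,
\]
using that on $[a,b]$ we have $W(\U)<\ell$ somewhere with the competitor staying on the boundary region where $W$ can be made $\le\ell$ along the segment only if the segment lies in $\overline{\mathbb D}$ — which is not guaranteed. So the truly robust choice is the \emph{constant} competitor: one of $\U(a),\U(b)$; set $\widetilde\U\equiv\U(a)$ on $[a,b]$ (then reconnect), exploiting $W(\U(a))=\ell$ and the fact that a constant has zero gradient energy, while the original $\U$ on $[a,b]$ has $W(\U)<\ell$ but also strictly positive kinetic energy $\tfrac12|\U'|^2$ on a set of positive measure (it cannot be constant, else it wouldn't dip below $\ell$). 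The key quantitative point is that the saved kinetic energy $\int_a^b e^{\si x}\tfrac12|\U'|^2\,dx>0$ must dominate the gained potential energy $\int_a^b e^{\si x}(\ell-W(\U))\,dx>0$; this is where an argument is needed — it is \emph{not} automatic.

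To make the energy comparison work I would invoke the Euler--Lagrange equation on $(a,b)$ together with a Pohozaev/energy identity: multiplying $\si\U'+\U''=\na W(\U)$ by $\U'$ and integrating gives $\tfrac{d}{dx}\big(\tfrac12|\U'|^2-W(\U)\big)=-\si|\U'|^2\le 0$, so $H(x):=\tfrac12|\U'(x)|^2-W(\U(x))$ is nonincreasing on $[a,b]$. At $x=a$ and $x=b$ we have $W(\U)=\ell$, and if additionally $\U'(a)=\U'(b)=0$ (which holds when $(a,b)$ is a maximal component of $\{W(\U)<\ell\}$ and $W(\U)$ attains the value $\ell$ tangentially — this needs care at endpoints, possibly using that $a,b$ are regular level crossings so actually $\U'$ need not vanish). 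Hmm — the honest statement is: $H(a)\ge H(b)$ forces $\tfrac12|\U'(a)|^2-\ell \ge \tfrac12|\U'(b)|^2-\ell$, i.e. $|\U'(a)|\ge|\U'(b)|$, which is not directly the sign I want. The main obstacle, then, is exactly this: producing a strictly cheaper competitor requires either a smarter cut (e.g. translating the portion $x\ge b$ leftward by $b-a$ and deleting $[a,b]$, using monotonicity of $e^{\si x}$ to control the resulting change, combined with $W\ge\underline w$ on the deleted piece being false — wait, $W<\ell<\underline w$ there, so deleting it \emph{removes negative-ish but actually we assumed the excursion goes below} $\underline w$, and $\ell<\underline w\le 0$ possibly, so the integrand $e^{\si x}(\tfrac12|\U'|^2+W(\U))$ could be negative there, meaning deletion could \emph{increase} $J$).

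Given these sign subtleties, the approach I would actually commit to in the writeup is the translation surgery done carefully: since $\underline w=\liminf_{x\to-\infty}W(\U(x))$, for any $\epsilon>0$ there are points arbitrarily far left where $W(\U)\le\underline w+\epsilon$; pick such a point $x_1\ll a$. Cut out $[a,b]$, glue $\U|_{(-\infty,a]}$ to a short affine bridge to $\U|_{[b,\infty)}$ shifted left by $b-a$, yielding $\widetilde\U\in\mathcal A$ (the shift preserves $\widetilde\U(0)\in\Gamma$-type conditions only if $0\notin[a,\infty)$ after shifting — since $b\le 0$, the shifted copy of $[b,\infty)$ covers $[b-(b-a),\infty)=[a,\infty)\supset\{0\}$, and $\U$ near $0$ had $\U(0)\in\Gamma$; after shift the point mapping to $0$ is $\U(b-a)$... this breaks the constraint $\widetilde\U(0)\in\Gamma$). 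So in fact the cleanest fix is to only modify on $(-\infty,0)$ where the constraint $\U(x)\in$ anything is absent except $W\ge\underline w$ is what we're proving — there is genuinely no $\mathcal A$-constraint on $(-\infty,0)$ other than matching at $0$ and the $H^1_{loc}$ regularity and the $x\to-\infty$ behavior. Therefore: the real argument is that replacing $\U$ on $(a,b)\subset(-\infty,0)$ by $\widetilde\U\equiv\U(a)$ (constant) keeps $\widetilde\U\in\mathcal A$ trivially, and one computes
\[
J(\si,\widetilde\U)-J(\si,\U)=\int_a^b e^{\si x}\Big(\ell-\tfrac12|\U'|^2-W(\U)\Big)\,dx - \int_a^b e^{\si x}\tfrac12|\U'|^2\,dx,
\]
wait, $W(\widetilde\U)=W(\U(a))=\ell$ and $|\widetilde\U'|=0$, so
\[
J(\si,\widetilde\U)-J(\si,\U)=\int_a^b e^{\si x}\Big(\ell-W(\U)\Big)\,dx-\int_a^b e^{\si x}\tfrac12|\U'|^2\,dx.
\]
The first term is \emph{positive} (since $W(\U)<\ell$ on $(a,b)$) and the second is negative — so this does NOT obviously give a contradiction. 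The resolution: choose instead $\ell=\underline w$ exactly and note that actually we must also use that $\U$ solves the ODE and the energy $H=\tfrac12|\U'|^2-W(\U)$ structure, OR — and I think this is the intended clean argument — use that on $(a,b)$ one has $W(\U)<\underline w\le 0$, hence the \emph{original} $\U$ could itself be improved by the affine interpolation only in the scalar-like convexity sense... I would therefore present the lemma's proof via the ODE Hamiltonian $H(x)=\tfrac12|\U'(x)|^2-W(\U(x))$ satisfying $H'=-\si|\U'|^2$: on any component $(a,b)$ of $\{W(\U)<\ell\}\cap(-\infty,0)$ with $W(\U(a))=W(\U(b))=\ell$, we get $H(b)\le H(a)$; but I also interpolate a competitor and exploit that the \emph{left} endpoint sits at higher $e^{\si x}$-weight-disadvantage... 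The main obstacle is thus pinning down the correct competitor that simultaneously (i) stays in $\mathcal A$, (ii) has strictly smaller energy, and (iii) respects the increasing weight $e^{\si x}$; I expect the answer is a constant competitor equal to $\U(a)$ \emph{combined with} the observation that if $W(\U)<\underline w$ on $(a,b)$ then by definition of $\underline w$ as a liminf we may take $a\to-\infty$ along a sequence where $W(\U(a))\to\underline w$, making $\ell\to\underline w$ and forcing the excursion depth to zero in the limit, contradicting $W(\U(x_0))<\underline w$ for the fixed $x_0$. That limiting/compactness step at $-\infty$, reconciling the pointwise hypothesis at $x_0$ with the asymptotic liminf, is the crux I'd spend the most effort on.
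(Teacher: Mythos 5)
There is a genuine gap: you correctly diagnose that every competitor you try fails, but you never find one that works, and the writeup ends without a proof. The core difficulty you identify is real — if you cut out a \emph{bounded} excursion $(a,b)\subset(-\infty,0)$ where $W(\U)<\ell$ and replace $\U$ there by the constant $\U(a)$ with $W(\U(a))=\ell$, the potential term changes by $\int_a^b e^{\si x}(\ell-W(\U))\,dx>0$, which fights against the saved kinetic energy, and neither the Hamiltonian identity nor the limiting argument you sketch at the end resolves the sign. (Your final suggestion — send $a\to-\infty$ so that $\ell\to\underline w$ and "the excursion depth goes to zero" — does not work: the excursion through the fixed $x_0$ has fixed depth $W(\U(x_0))<\underline w$ regardless of $\ell$.)

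The missing idea is to do the surgery on an \emph{unbounded} interval and to anchor the constant at the \emph{global minimum} of $W(\U(\cdot))$ on $(-\infty,0]$. Under the contradiction hypothesis, $\inf_{(-\infty,0]}W(\U)<\underline w$; since $\liminf_{x\to-\infty}W(\U(x))=\underline w$, this infimum is attained at some finite $x_0<0$ with $W(\U(x_0))<\underline w$. Now set $\V=\U$ on $(x_0,\infty)$ and $\V\equiv\U(x_0)$ on $(-\infty,x_0]$. This $\V$ lies in $\mathcal{A}$ (nothing changes on $[0,\infty)$), and on $(-\infty,x_0]$ \emph{both} terms of the integrand improve pointwise: $|\V'|^2=0\le|\U'|^2$ and $W(\V)=W(\U(x_0))=\min_{(-\infty,0]}W(\U)\le W(\U(x))$. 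Hence $J(\si,\V)\le J(\si,\U)$, and equality would force $\U\equiv\U(x_0)$ on $(-\infty,x_0]$, giving $\lim_{x\to-\infty}W(\U(x))=W(\U(x_0))<\underline w$ and contradicting the definition of $\underline w$ as the liminf; so $J(\si,\V)<J(\si,\U)$, contradicting minimality. By choosing the replacement constant to be the minimizer of $W$ along the trajectory rather than a level-$\ell$ endpoint value, the potential comparison that defeated your bounded-interval competitors becomes favorable, and the increasing weight $e^{\si x}$ plays no role at all.
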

\begin{proof} 
If the assertion is not true, there exists some $x_{0}\le 0$ such that 
\bnn\la{8.1} W(\U(x_{0}))=\min_{x\in (-\infty,0]}W(\U(x))<\underline{w}.\enn
By setting $\V(x)=\U(x) \textbf{1}_{(x_{0},\infty)}(x)+\U(x_{0})\textbf{1}_{(-\infty,x_{0}]}(x),$
one can easily verify that 
$J(\si,\V)<J(\si,\U)$. This contradicts to the fact that $\U$ is a minimizer of $J(\sigma,\cdot).$  The proof of Lemma \ref{lem4.1} is completed. 
\end{proof}

\begin{lem}\la{lem4.2}    Let $\u$ be a minimizer of $J(c,\cdot)$ in $\mathcal{A}$.   
Then  ${\displaystyle \lim_{x\rightarrow-\infty}W(\U(x))}$ exists.    \end{lem}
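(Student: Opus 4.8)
The plan is to track $\U$ near $-\infty$ through the quantity $H(x):=\tfrac12|\U'(x)|^{2}-W(\U(x))$, which is of class $C^{2}$ on $(-\infty,0]$ by Lemma \ref{lem3.1}. Differentiating and using the Euler--Lagrange equation $\si\U'+\U''=\na W(\U)$ on $(-\infty,0]$ gives $H'(x)=\U'(x)\cdot(\U''(x)-\na W(\U(x)))=-\si|\U'(x)|^{2}\le0$, so $H$ is non-increasing on $(-\infty,0]$ and $H_{-\infty}:=\lim_{x\to-\infty}H(x)$ exists in $(H(0),+\infty]$. Since $W(\U(x))=\tfrac12|\U'(x)|^{2}-H(x)$, once $H_{-\infty}<\infty$ the convergence of $W(\U(x))$ is equivalent to that of $\tfrac12|\U'(x)|^{2}$, and the quantities in \eqref{4.1} satisfy $\underline{w}=\liminf_{x\to-\infty}\tfrac12|\U'(x)|^{2}-H_{-\infty}$ and $\overline{w}=\limsup_{x\to-\infty}\tfrac12|\U'(x)|^{2}-H_{-\infty}$.

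First I would prove $H_{-\infty}<\infty$. The finiteness of $J(\si,\U)=\g(\si)$, together with $W(\U)\ge0$ on $(0,\infty)$ and $W\ge-m$ on $\rr$ (as in Lemma \ref{lem3.2}), gives $\int_{-\infty}^{0}e^{\si x}|\U'|^{2}\,{\rm d}x<\infty$. On the other hand $|\U'|^{2}=2(H+W(\U))\ge2(H-m)$ and $H'=-\si|\U'|^{2}$ show that $x\mapsto e^{2\si x}(H(x)-m)$ is non-increasing on $(-\infty,0]$; hence if $H(x_{1})>m$ for some $x_{1}\le0$ then $H(x)-m\ge e^{2\si(x_{1}-x)}(H(x_{1})-m)$ for $x\le x_{1}$, so $e^{\si x}|\U'(x)|^{2}\ge 2e^{2\si x_{1}}(H(x_{1})-m)\,e^{-\si x}\to+\infty$ as $x\to-\infty$, contradicting the finiteness just obtained. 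Thus $H(x)\le m$ on $(-\infty,0]$ and $H_{-\infty}\le m<\infty$. Consequently $\int_{-\infty}^{0}\si|\U'|^{2}\,{\rm d}x=H_{-\infty}-H(0)<\infty$, so $|\U'|^{2}\in L^{1}(-\infty,0)$, which forces $\liminf_{x\to-\infty}|\U'(x)|^{2}=0$; hence $\underline{w}=-H_{-\infty}$, and it remains to show $\limsup_{x\to-\infty}|\U'(x)|^{2}=0$, i.e. $\overline{w}=\underline{w}$.

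To finish I would argue by contradiction: suppose $\overline{w}>\underline{w}$. Then $W(\U)$ is non-constant near $-\infty$, so $\U'\not\equiv0$ on $(-\infty,0]$; since then $H$ is non-constant with $\inf_{(-\infty,0]}H=H(0)=\tfrac12|\U'(0)|^{2}\ge0$ (recall $W=0$ on $\Gamma$), we get $H_{-\infty}>0$, i.e. $\underline{w}<0$. Fix $\underline{w}<a<b<\min\{0,\overline{w}\}$. Because $\liminf W(\U)=\underline{w}$ and $\limsup W(\U)=\overline{w}$, there are points arbitrarily far to the left with $W(\U)<a$ and with $W(\U)>b$, and by interlacing them one extracts pairwise disjoint intervals $[p_{k},q_{k}]\subset(-\infty,0)$ with $p_{k},q_{k}\to-\infty$, $W(\U(p_{k}))=a$, $W(\U(q_{k}))=b$ and $a\le W(\U)\le b$ on $[p_{k},q_{k}]$. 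On these intervals, for $k$ large: $W(\U)\le b<0$ forces $\U(x)\in\overline{\mathbb D}$, a bounded set, so $|\na W(\U(x))|\le C_{0}:=\max_{\overline{\mathbb D}}|\na W|$; and $|\U'(x)|^{2}=2(H(x)+W(\U(x)))$ with $H(x)\to H_{-\infty}=-\underline{w}$ gives $0<\delta_{*}^{2}\le|\U'(x)|^{2}\le M_{*}^{2}<\infty$ for constants depending only on $a,b,\underline{w}$ (here the monotone convergence $H\to H_{-\infty}$ is what makes the bound uniform in $k$). Then $\delta_{*}^{2}\sum_{k}(q_{k}-p_{k})\le\int_{-\infty}^{0}|\U'|^{2}<\infty$ yields $q_{k}-p_{k}\to0$, whence $\int_{p_{k}}^{q_{k}}|\U'|\,{\rm d}x\le(q_{k}-p_{k})^{1/2}\|\U'\|_{L^{2}(-\infty,0)}\to0$. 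But
\[
0<b-a=W(\U(q_{k}))-W(\U(p_{k}))=\int_{p_{k}}^{q_{k}}\na W(\U)\cdot\U'\,{\rm d}x\le C_{0}\int_{p_{k}}^{q_{k}}|\U'|\,{\rm d}x\longrightarrow0,
\]
a contradiction. Hence $\overline{w}=\underline{w}$, and $\lim_{x\to-\infty}W(\U(x))$ exists.

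I expect the two delicate points to be: (i) excluding $H_{-\infty}=+\infty$, where one must combine a Gr\"onwall-type lower bound for $H$ with the finiteness of the weighted Dirichlet integral $\int_{-\infty}^{0}e^{\si x}|\U'|^{2}\,{\rm d}x$; and (ii) the final step, where there is no a priori global bound on $\U$ — this is circumvented by noting that on the oscillation intervals $W(\U)$ stays negative, so $\U$ lies in the bounded set $\overline{\mathbb D}$ there, while the monotonicity of $H$ pins $|\U'|$ between two positive constants on those intervals.
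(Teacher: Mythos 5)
Your proof is correct, but it takes a genuinely different route from the paper's. The paper never leaves the variational framework: it first shows (Lemma \ref{lem4.1}, by truncation) that $W(\U(x))\ge\underline{w}$ on $(-\infty,0]$, and then, assuming $\overline{w}>\underline{w}$, builds an explicit competitor $\V_{n}$ that replaces $\U$ on $(-\infty,y_{n}]$ by a constant plateau followed by a reparametrized segment joining $\U(y_{n})$ to a limit point $\underline{\U}$, checking term by term that $J(\si,\V_{n})<J(\si,\U)$ for large $n$. You instead exploit the Euler--Lagrange equation of Lemma \ref{lem3.1} through the Lyapunov function $H=\tfrac12|\U'|^{2}-W(\U)$, whose monotonicity $H'=-\si|\U'|^{2}$ (the $x<0$ half of \eqref{a1}) reduces the whole question to showing $\U'(x)\to0$; the only variational input you need is the finiteness of $\int_{-\infty}^{0}e^{\si x}|\U'|^{2}\,{\rm d}x$. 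Your Gr\"onwall step excluding $H\to+\infty$ (via the monotonicity of $e^{2\si x}(H-m)$), the resulting $L^{1}$ bound on $|\U'|^{2}$, and the shrinking-interval argument — where $W(\U)\le b<0$ places $\U$ in the bounded set $\overline{\mathbb{D}}$ so that $|\na W(\U)|\le C_{0}$, while the monotone convergence $H\nearrow H_{-\infty}=-\underline{w}$ gives the uniform lower bound $|\U'|^{2}\ge 2(a-\underline{w})-\eps$ on $[p_{k},q_{k}]$ — are all sound. What each approach buys: the paper's argument is purely variational and uses the equation only implicitly, so it is more robust to loss of regularity; yours is shorter once the ODE is in hand, bypasses Lemma \ref{lem4.1} entirely, and delivers for free the extra conclusions $\U'(x)\to0$, $\underline{w}=-H_{-\infty}$ and $H_{-\infty}\le m$, i.e.\ part of Lemma \ref{lem4.3}. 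Two cosmetic points: $H_{-\infty}$ lies in $[H(0),+\infty]$, not $(H(0),+\infty]$, since $\U$ may be constant on $(-\infty,0]$; and the upper bound $M_{*}$ on $|\U'|$ over the oscillation intervals is never actually used in your final estimate.
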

\begin{proof} Let $\underline{w}$ and $\overline{w}$ be defined as  in \eqref{4.1}.  It suffices to show that  $\underline{w}=\overline{w}.$ 
First by taking  $x=0$ in \eqref{9.1} we  find  that $\underline{w}\le  W(\U(0))=0.$
We shall   prove  the assertion $\overline{w}= \underline{w}$  in two cases:  (1). $\underline{w}=0;$ (2). $\underline{w}<0.$

{\it Case (1). $\underline{w}=0.$}  Set $\V(x)=\U(x) \textbf{1}_{(0,\infty)}+\U(0)\textbf{1}_{(-\infty,0]}.$
Then from \eqref{9.1} we see that   $J(\si,\V)\le J(\si,\U)$, and $J(\si,\V)< J(\si,\U)$ if  $\V \neq\U$.   Since $\U$ is a minimizer, it  follows that  $\V \equiv\U$. Therefore, $\U(x)\equiv \U(0)$ for all $x<0$,  so $\overline{w}= \underline{w}=0$.    

{\it Case (2). $\underline{w}<0.$} We prove $\overline{w}=\underline{w}$    by a contradiction argument.  Assume that $\overline{w}>\underline{w}.$ Then, as $x\rightarrow-\infty,$ $W(\U(x))$ oscilates between  $\overline{w}$ and $\underline{w}$ infinitely times. Hence there exists a sequence  $\{x_{n}\}_{n=1}^{\infty}$  in $(-\infty,0]$ such that 
\be\la{4.5} \lim_{n\rightarrow\infty}x_{n}=-\infty\quad {\rm and}\quad  W(\U(x_{n}))=\frac{\underline{w}+3\overline{w}\wedge 0}{4}\quad \forall n\ge1,\ee where $\overline{w}\wedge 0=\min\{\overline{w},0\}.$  
Let  $\{y_{n}\}_{n=1}^{\infty}$ be a sequence defined by 
\bnn\la{4.6} y_{n}=\min\left\{y\ge x_{n} \,:\,  W(\U(y))=\min_{ [x_{n},0]}\,W(\U(\cdot))\right\}\quad \forall n\ge1.\enn
It is clear  that $\lim_{n\rightarrow\infty}W(\U(y_{n}))=\underline{w}$.  By assumption ${\bf (A)}$, the set $ \mathbb{D}=\{{\U} \in \rr\,|\,W(\U)<0 \}$ is bounded. Hence, by selecting a   subsequence  if necessary, we can find $\underline{\U}\in \mathbb{D}$ such that \be\la{4.7}   \lim_{n\rightarrow \infty}\U(y_{n})=\underline{\U}\quad {\rm and}\quad W(\underline{\U})=\underline{w}.\ee
Since $\overline{w}\wedge 0>\underline{w},$ we might as well assume that 
\be\la{4.8a}\ba & \sup_{n\ge1}\,\sup_{\th\in[0,1]}W(\th\underline{\U}+(1-\th)\U(y_{n}))\le \frac{3\underline{w}+\overline{w}\wedge 0}{4}.\ea\ee
Let $z_{n}=\min\left\{x\ge x_{n}\,:\,W(\U(x))=\frac{\underline{w}+\overline{w}\wedge 0}{2}\right\}.$ Then, from \eqref{4.5} and \eqref{4.8a} we have $x_{n}<z_{n}<y_{n},$ and moreover
\be\la{4.10} W(\U(z_{n}))=\frac{\underline{w}+\overline{w}\wedge 0}{2}<W(\U(x))\quad \forall  x\in [x_{n},z_{n}).\ee
Next, we  define
\be\la{4.14}\ba \V_{n}(x)=\left\{\ba&\U(x)&{\rm if}&\,\, x>y_{n},\\
&\U(y_{n})&{\rm if}&\,\, z_{n}\le x\le y_{n},\\
&\U(y_{n})+\frac{\underline{\U}-\U(y_{n})}{\int_{x_{n}}^{z_{n}} |\U'(s)|ds}\,\int_{x}^{z_{n}} |\U'(s)|ds&{\rm if}& \,\, x_{n}\le x\le z_{n},\\
&\underline{\U}&{\rm if}&\,\,x\le x_{n}.\ea\right.\ea\ee
Recall that $W(\U(x))\ge \underline{w}=W(\underline{\U})$
for all $x\le0$ and $W(\U(y_{n}))= \min_{[x_{n},0]}W(\U(\cdot))\le W(\U(x))$ for $x\in[x_{n},0].$ We see that 
\be\la{4.9}\ba &J(\si,\U)-J(\si,\V_{n})\\
&>\int_{x_{n}}^{z_{n}}e^{\si x}\left(\frac{1}{2}(|\U'(x)|^{2}-|\V_{n}(x)|^{2})+W(\U(x))-W(\V_{n}(x))\right){\rm d}x.\ea\ee

We shall  estimate \eqref{4.9} as follows.

(i).  Observe from \eqref{4.14} that when $x\in (x_{n},z_{n})$
\bnn \ba |\V_{n}'(x)|&=\frac{|\underline{\U}-\U(y_{n})|}{\int_{x_{n}}^{z_{n}} |\U'(s)|ds}\,|\U'(x)|\le \frac{|\underline{\U}-\U(y_{n})|}{|\U(z_{n})-\U(x_{n})|}\,|\U'(x)|.
\ea\enn From  \eqref{4.7},  
 \eqref{4.5}, \eqref{4.10}, 
and the assumption that $\mathbb{D}$ is bounded,  we derive  that 
\bnn\la{4.12}  |\V_{n}'(x)| \le |\U'(x)| \quad {\rm for\,\,all} \,\,\,x\in (x_{n},z_{n})\,\,{\rm when}\,\,n \,\,{\rm is\,\,large}.\enn

(ii). For all $x\in [x_{n},z_{n}]$ 
\bnn W(\U(x))\ge W(\U(z_{n}))=\frac{\underline{w}+\overline{w}\wedge 0}{2}\enn
and 
\bnn W(\V_{n}(x))\le  \sup_{\th\in[0,1]}W(\th\underline{\U}+(1-\th)\U(y_{n}))\le \frac{3\underline{w}+\overline{w}\wedge 0}{4}.\enn
We deduce that 
\bnn\la{4.17} W(\V_{n}(x))< W(\U(x))\quad {\rm for\,\,all} \,\,\,x\in [x_{n},z_{n}]\,\,{\rm when}\,\,n \,\,{\rm is\,\,large}.\enn
It then follows  from \eqref{4.9} that, $J(\si,\U)-J(\si,\V_{n})>0$ for large $n.$
This leads to a  contradiction as $\U$ is the minimizer. Thus $\underline{w}=\overline{w}.$   
The proof of Lemma \ref{lem4.2} is completed.\end{proof}

 \begin{lem}\la{lem4.3}    Let $\u$ be a minimizer of $J(c,\cdot)$ in $\mathcal{A}$.   Then 
 \be\la{4.19} \lim_{x\rightarrow-\infty}\left(|\na W(\U(x))|+|\U'(x)|\right)=0.\ee
\end{lem}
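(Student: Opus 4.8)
The goal is to show that for a minimizer $\U$ of $J(\si,\cdot)$ we have $|\na W(\U(x))| + |\U'(x)| \to 0$ as $x \to -\infty$. I would split this into two complementary tasks: first showing $|\U'(x)| \to 0$, and then using this together with the ODE on $(-\infty,0]$ (Lemma \ref{lem3.1}) to control $|\U''|$ and hence $|\na W(\U)|$. The natural starting point is the energy: since $\U \in \mathcal{A}$, we have $W(\U) \ge -m$ on $(-\infty,0]$ (where $m = -\inf W$), so the tail integral $\int_{-\infty}^{0} e^{\si x}\big(\tfrac12|\U'|^2 + W(\U) + m\big)\,dx$ is finite and nonnegative; in particular $\int_{-\infty}^{0} e^{\si x}|\U'(x)|^2\,dx < \infty$. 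This integrability forces $e^{\si x/2}|\U'(x)| \to 0$ along a sequence, but to upgrade to a genuine limit I would use the ODE to get uniform bounds on $\U''$ near $-\infty$ — from $\U'' = \na W(\U) - \si\U'$ and the boundedness of $\U$ on $(-\infty,0]$ (which follows from Lemma \ref{lem4.1}: $W(\U) \ge \underline w$ traps $\U$ in the bounded set where $W \ge \underline w$, or in $\mathbb{D}$) together with local $H^1$ (hence $L^\infty_{loc}$) control of $\U'$. Actually the cleanest route: $\U$ is bounded on $(-\infty,0]$, so $\na W(\U)$ is bounded there; combined with the ODE and a Gronwall/bootstrap argument one gets $\U'$ and $\U''$ bounded on $(-\infty,0]$.

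Once $\U'$ is bounded on $(-\infty,0]$, the convergence $|\U'(x)| \to 0$ follows from $\int_{-\infty}^0 e^{\si x}|\U'|^2\,dx < \infty$ by a standard argument: if $|\U'(x_n)| \ge \delta > 0$ along $x_n \to -\infty$, then by the uniform bound on $\U''$ the function $|\U'|$ stays above $\delta/2$ on intervals $[x_n - \eta, x_n + \eta]$ of fixed length $\eta > 0$, and on such an interval $\int e^{\si x}|\U'|^2 \ge c\, e^{\si x_n} \eta \delta^2/4$, which does \emph{not} tend to zero as $x_n \to -\infty$ — wait, it does, since $e^{\si x_n} \to 0$. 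So I need to be more careful: the correct conclusion from $\int_{-\infty}^0 e^{\si x}|\U'|^2 dx < \infty$ alone is weak. The right tool is to revisit the variational comparison. I would instead argue: by Lemma \ref{lem4.2}, $W(\U(x)) \to w$ for some $w \le 0$. If $|\U'(x_n)| \ge \delta$ along $x_n \to -\infty$, cut out a neighborhood of $x_n$ and replace $\U$ by a flatter competitor $\V_n$ (constant equal to $\U$ at the endpoints of a fixed-length window, interpolated so as not to increase $W$ since $W \approx w$ there and $\U$ stays near the level set), contradicting minimality as in Lemma \ref{lem4.2}; the saving in kinetic energy is of order $e^{\si x_n}\delta^2$ against a cost in potential energy that can be made $o(e^{\si x_n}\delta^2)$ because $W(\U) - w \to 0$. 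Carrying this out gives $|\U'(x)| \to 0$.

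Finally, having $|\U'(x)| \to 0$, I use the ODE $\U'' = \na W(\U) - \si\U'$ to deduce the claim for $\na W$. Differentiating the relation $\frac{d}{dx}\big(\tfrac12|\U'|^2\big) = \U'\cdot\U'' = \U'\cdot\na W(\U) - \si|\U'|^2$, or more simply: along $(-\infty,0]$ the ODE gives $\U'' = \na W(\U) - \si \U'$; since $\U$ is $C^3$ there (Lemma \ref{lem3.1}) and bounded, and $\U' \to 0$, I claim $\U'' \to 0$ as well. This follows because $\frac{d}{dx}|\U'|^2 = 2\U'\cdot\U''$ is bounded (all quantities bounded), so $|\U'|^2$ is uniformly continuous; since $\int_{-\infty}^0 e^{\si x}|\U''|^2\,dx$ can be shown finite by multiplying the ODE by $e^{\si x}\U''$ and integrating (using the finite energy and boundedness), and since a square-integrable-against-$e^{\si x}$ function with bounded derivative... — again this requires care. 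The robust argument: from $\U' \to 0$ and the mean value theorem, if $\U''(x_n) \not\to 0$ along $x_n \to -\infty$, then since $\na W(\U)$ and $\U'$ are equicontinuous on unit windows (bounded derivatives), $\U''$ stays bounded away from $0$ on a fixed-length window, forcing $|\U'|$ to change by a fixed amount across that window — contradicting $\U' \to 0$. Hence $\U'' \to 0$, and then $\na W(\U) = \U'' + \si\U' \to 0$. \textbf{The main obstacle} I anticipate is the convergence $|\U'(x)| \to 0$ (not merely along a subsequence or in an integrated sense): the exponential weight $e^{\si x}$ makes the tail energy shrink automatically, so finiteness of $\int e^{\si x}|\U'|^2$ gives essentially no pointwise control at $-\infty$; the genuine input must come from the minimality via a cut-and-flatten competitor argument analogous to Lemmas \ref{lem4.1}–\ref{lem4.2}, and getting the potential-energy cost of that competitor to be negligible compared to the kinetic-energy gain — using $W(\U)\to w$ and that $\U$ traverses a region where $W$ is nearly constant — is the delicate point.
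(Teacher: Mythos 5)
The second half of your plan (from $\U'\to 0$, bounded derivatives, and the ODE $\U''=\na W(\U)-\si\U'$ deduce $\U''\to 0$ and hence $\na W(\U)\to 0$) is fine. The gap is in the first half, exactly at the point you yourself flag as delicate: the cut-and-flatten competitor does \emph{not} produce a kinetic-energy saving of order $e^{\si x_n}\delta^2$. On a window $[x_n-\eta,x_n+\eta]$ the best "flattened" competitor matching the endpoints is the affine interpolant, whose weighted kinetic energy is $\approx e^{\si x_n}|\U(x_n+\eta)-\U(x_n-\eta)|^2/(2\eta)$; by Cauchy--Schwarz this is at most the original energy, but the defect vanishes when $\U'$ is (nearly) constant on the window. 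So the dangerous scenario --- $\U$ moving at steady speed $\delta$ along the level set $\{W=w\}$, so that $W(\U)$ is already constant and no potential cost arises, while the path is locally straight and no kinetic saving arises --- is precisely the one your comparison cannot exclude. Energy comparison alone does not rule out a nonconstant motion along a level set of $W$; some input from the Euler--Lagrange equation is needed.

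The paper closes this with an ODE rigidity argument rather than a competitor: after establishing $\|\U\|_{C^3((-\infty,0])}<\infty$, pick $x_n\to-\infty$ realizing the limsup of $|\na W(\U)|+|\U'|$, set $\V_n(x)=\U(x_n+x)$, and pass to an entire bounded solution $\V$ of $\si\V'+\V''=\na W(\V)$ with $W(\V(x))\equiv w$ (by Lemma \ref{lem4.2}). Then $0=\na W(\V)\cdot\V'=(\si\V'+\V'')\cdot\V'$ gives $\bigl(\tfrac12|\V'|^2e^{2\si x}\bigr)'=0$, so $|\V'(x)|=Ae^{-\si x}$; boundedness of $\V'$ forces $A=0$, hence $\V$ is a constant equilibrium, $\p=\mathbf{0}$ and $\na W(\underline{\U})=\mathbf{0}$, which kills the limsup. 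I recommend you replace your competitor step by this compactness-plus-rigidity argument (or supply a genuinely different mechanism that exploits the equation, not just the energy); as written, the central quantitative claim of your key step is unjustified and appears false.
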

\begin{proof}  Remember that $w=\lim_{x\rightarrow-\infty}W(\U(x))$ exists. If $w=0$, we have $\U(\cdot)\equiv\U(0)$ on $(-\infty,0].$ If $w<0,$ we find that $\U$ is bounded on $(-\infty,0].$  By elliptic estimates, we see from  \eqref{6.12} that 
\be\la{a21} \|\U\|_{C^{3}((-\infty,0])}<\infty.\ee
Now let   $\{x_{n}\}_{n=1}^{\infty}$ be a sequence   such that
$\lim_{n\rightarrow\infty} x_{n}=-\infty$
and \bnn\la{4.21}  \lim_{n\rightarrow\infty}\left( |\na W(\U(x_{n}))|+|\U'(x_{n})|\right)=\limsup_{x\rightarrow-\infty}\left( |\na W(\U(x))|+|\U'(x)|\right).\enn
We assume that,  by replacing $\{x_{n}\}_{n=1}^{\infty}$ by a subsequence if necessary,    for some $\underline{\U}$ and $\p\in \rr,$
\bnn\la{4.22a} (\U(x_{n}),\,\U'(x_{n}))\longrightarrow (\underline{\U},\,\p)\quad {\rm as}\quad n\rightarrow\infty.\enn
Define  $ \V_{n}(x)=\U(x_{n}+x)$.  From  \eqref{6.12}  and the continuous dependence of  solutions on the initial values for ODE systems,  the limit $ \V(x)=\lim_{n\rightarrow\infty}\V_{n}(x) $ exists.  In addition,
 from  \eqref{6.12} and \eqref{a21}, $\V$ is a solution of the following 
\be\la{4.22}\left\{\ba& \si\V'+\V''=\na W(\V)\quad {\rm in}\,\,\r,\\
&\V(0)=\underline{\U},\quad \V'(0)=\p,\quad \|\V\|_{C^{3}(\r)}<\infty.\ea\right.\ee
Recall from Lemma \ref{lem4.2} that $\lim_{x\rightarrow-\infty}W(\U(x))=w.$   We have 
\bnn\la{8.3}W(\V(x))=\lim_{n\rightarrow\infty}W(\U(x_{n}+x))=w\quad \forall x\in\r. \enn
This implies that $W(\V(\cdot))$ is a constant function, and consequently, 
\bnn 0=\frac{{\rm d}}{{\rm d}x}W(\V(x))=\na W(\V(x))\cdot \V'(x)\quad \forall x\in\r. \enn
 By this and \eqref{4.22} we compute 
 \bnn\ba 0&=e^{2\si x}\na W(\V)\cdot \V'(x)=e^{2\si x}\left(\si\V'+\V''\right)\cdot \V'(x)=\left(\frac{|\V'|^{2}}{2} e^{2\si x}\right)',\ea\enn
 and thus, there  exists a constant  $A\ge 0$ such  that 
 $|\V'(x)|=A e^{-\si x}$. Since  $\V'$ is bounded in $\r,$ we must have  $A\equiv0$, and thus, $\V$ is a constant vector function.   Consequently, $\p=\textbf{0}$ and $\na W(\underline{\U})=\si \V'(0)+\V''(0)=\textbf{0}$. Hence,
 $$\limsup_{x\rightarrow-\infty}\left(|\na W(\U(x))|+|\U'(x)|\right)=|\na W(\underline{\U})|+|\p|=0.$$
    The proof of Lemma \ref{lem4.3} is  completed.
\end{proof}

\section{Monotonicity and Continuity of $\g(c)$}
  \subsection{Variation of $J(\si,\U)$ in $\si$}  
  To prove the monotinicity and continuity of $\g,$ we first study the function $J(a,\U)-J(\si,\U)$ where $\U$ is a minimizer of $J(\si,\cdot)$ in $\mathcal{A}.$  
   \begin{lem}\la{lem7.2} Let $\si>0$ and $\U$ be a minimizer of $J(\si, \cdot)$ in $\mathcal{A}$.   The  following holds:
  \begin{enumerate}
\item $ W(\U),\,\,\, |\U'| \in C^{\frac{1}{2}}(\r\setminus \{0\}),\,\, \left( \frac{1}{2}|\U'|^{2}-W(\U)\right)\in C^{\frac{3}{2}}(\r\setminus \{0\}),$ and 
\be\la{a1} \left( \frac{1}{2}|\U'|^{2}-W(\U)\right)' +\si |\U'|^{2}=0\quad {\rm in}\,\,\,\,\r\setminus\{0\}.\ee 

\item For $a\in(0,2\si),$ 
\begin{align}\la{a22} \int_{0}^{\infty}\!\!\left(e^{a x}-e^{\si x}\right) \left( \frac{|\U'(x)|^{2}}{2}+W(\U(x))\right) {\rm d}x&=\frac{a-\si}{a}\int_{0}^{\infty}\!\!\left(e^{a x}-1\right)  |\U'(x)|^{2}  {\rm d}x\\
&\le  \frac{2\max\{a-\si,\,0\}}{2\si-a} \left(\g(\si)+\frac{m}{\si}\right).\la{a6}
\end{align} 
\item 
For every $a>0,$
\begin{align} &\int_{-\infty}^{0}\left(e^{a x}-e^{\si x}\right) \left( \frac{1}{2}|\U'(x)|^{2}+W(\U(x))\right) {\rm d}x\nonumber\\
&\qquad\quad=\frac{a-\si}{a\si} \left( \frac{1}{2} |\U'(0-)|^{2}+\si\int_{-\infty}^{0}e^{ax} |\U'(x)|^{2}{\rm d}x\right)\la{a8}\\   
&\qquad\quad\le \frac{\max\{a-\si,\,0\}}{a\si}m. \la{a9}
\end{align}
\end{enumerate}
 \end{lem}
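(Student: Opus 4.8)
The plan is to exploit the Euler--Lagrange equation $\si\U'+\U''=\na W(\U)$, which by Lemmas \ref{lem3.1} and \ref{lem5.1} holds on $(-\infty,0]$ and on the open set $I(\U)=\{x>0:\U(x)\notin\Ga\}$, together with the exponential decay from Lemma \ref{lem7.1}. For part (1), I would first establish the regularity claims: on $(-\infty,0]$ the minimizer is $C^3$, and on $(0,\infty)$ it is $C^3$ on $I(\U)$ while on the complement (where $\U\in\Ga$, a closed set) one argues that $\U$ is locally constant or uses that $\U'$ is controlled; the Hölder claims $W(\U),|\U'|\in C^{1/2}$ away from $0$ then follow because $\U\in C^1$ globally and $W$ is $C^2$, so these are actually Lipschitz on compact subsets of $\r\setminus\{0\}$ — the only genuinely subtle point is the matching at points of $\Ga$ on $(0,\infty)$, which is where I expect to invoke that $\U'$ exists one-sidedly and the energy identity forces continuity of the combination $\tfrac12|\U'|^2-W(\U)$. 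To get \eqref{a1}, dot the Euler--Lagrange equation with $\U'$: on any interval where the equation holds,
\[
\Big(\tfrac12|\U'|^2-W(\U)\Big)'=\U'\cdot\U''-\na W(\U)\cdot\U'=\U'\cdot\U''-(\si\U'+\U'')\cdot\U'=-\si|\U'|^2,
\]
which is \eqref{a1}; the $C^{3/2}$ regularity of $\tfrac12|\U'|^2-W(\U)$ is then read off from \eqref{a1} since the right-hand side is $C^{1/2}$.

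For part (2), I would integrate \eqref{a1} against suitable weights on $(0,\infty)$. Concretely, multiply \eqref{a1} by $e^{ax}-1$ (which vanishes at $x=0$ to kill boundary terms there) and integrate by parts over $(0,R)$, using the decay estimate \eqref{6.24} to send $R\to\infty$ with no boundary contribution at $+\infty$. This should produce
\[
\int_0^\infty (e^{ax}-1)\,\si|\U'|^2\,{\rm d}x = a\int_0^\infty e^{ax}\Big(\tfrac12|\U'|^2-W(\U)\Big){\rm d}x - \text{(terms)},
\]
and after rearranging and subtracting the $a=\si$ instance (or directly manipulating), one lands on the identity \eqref{a22}. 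The inequality \eqref{a6} follows by noting $e^{ax}-1\le e^{ax}$ and, when $a<2\si$, bounding $\int_0^\infty e^{ax}|\U'|^2{\rm d}x$ by a multiple of $\int_0^\infty e^{2\si x}|\U'|^2{\rm d}x$ via Hölder or just $e^{ax}\le \tfrac{2\si-a}{2\si}e^{ax}+\tfrac{a}{2\si}$-type splitting; the quantity $\int_0^\infty e^{\si x}|\U'|^2{\rm d}x$ is in turn controlled by $2(\g(\si)+m/\si)$ using $J(\si,\U)=\g(\si)$ and $W(\U)\ge -m\mathbf 1_{(-\infty,0)}$, exactly as in the proof of Theorem \ref{lem3.0a}.

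For part (3), the same strategy applies on $(-\infty,0)$, but now the weight $e^{ax}-e^{\si x}$ must be handled so that the boundary term at $x=0$ survives (producing the $\tfrac12|\U'(0-)|^2$ term) while convergence at $-\infty$ is guaranteed; here I would use Lemma \ref{lem4.3}, which gives $|\U'(x)|\to 0$ as $x\to-\infty$, together with boundedness of $\U$ on $(-\infty,0]$ from \eqref{a21}, to justify that $e^{ax}(\tfrac12|\U'|^2-W(\U))$ and $e^{ax}|\U'|^2$ are integrable and have vanishing limit at $-\infty$. Integrating \eqref{a1} multiplied by $e^{ax}$ over $(-N,0)$, integrating by parts, and comparing with the $a=\si$ case yields \eqref{a8}. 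Finally \eqref{a9} follows from \eqref{a8} by dropping the nonnegative $\tfrac12|\U'(0-)|^2$ term when $a\le\si$, and when $a>\si$ by bounding $\si\int_{-\infty}^0 e^{ax}|\U'|^2{\rm d}x$: combining \eqref{a8} with the minimizer identity $J(\si,\U)=\g(\si)$ and the lower bound $W(\U)\ge\underline w\ge$ (a quantity $\ge -m$) from Lemma \ref{lem4.1} should give $\int_{-\infty}^0 e^{ax}(\tfrac12|\U'|^2+W(\U)){\rm d}x\le$ a controlled quantity, which after tracing through the algebra collapses to $m/(a\si)$ times $\max\{a-\si,0\}$. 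The main obstacle I anticipate is the bookkeeping of boundary terms at $x=0$ and at the two infinities — in particular making rigorous that no contribution leaks in from $+\infty$ (handled by \eqref{6.24}) or from $-\infty$ (handled by Lemmas \ref{lem4.2}--\ref{lem4.3}), and correctly tracking the one-sided derivative $\U'(0-)$ since $\U$ need not be $C^1$ across $x=0$.
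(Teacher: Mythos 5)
Your parts (2) and (3) follow essentially the paper's route (integrate the energy identity against $e^{ax}$ on each half-line, compare with the $a=\si$ case, and control the resulting derivative terms at $0\pm$ by the energy), and the bookkeeping you describe is the right one. The genuine gap is in part (1): you derive \eqref{a1} by dotting the Euler--Lagrange equation with $\U'$, but that equation is only available on $(-\infty,0]$ and on the open set $I(\U)=\{x>0:\U(x)\notin\Ga\}$. On the contact set $(0,\infty)\setminus I(\U)$ the outer variation $\U+t\varphi$ is obstructed by the constraint $W(\U)\ge 0$ (a Lagrange-multiplier term can appear), and this set is merely closed — it need not consist of isolated points, need not have measure zero, and there is no a priori reason for $\U$ to be locally constant or even $C^1$ there. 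Your fallback ("the energy identity forces continuity of $\tfrac12|\U'|^2-W(\U)$") is circular: that identity is exactly what you are trying to prove on this set.

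The missing idea is an \emph{inner} (domain) variation, which is how the paper proceeds: for scalar $\varphi\in C_0^\infty((0,\infty);\r)$ set $\V^t(x)=\U(x+t\varphi(x))$. This reparametrization preserves both constraints ($\U(0)\in\Ga$ and $W\ge0$ on $(0,\infty)$), so $\V^t\in\mathcal{A}$ and $\tfrac{d}{dt}J(\si,\V^t)|_{t=0}=0$ with no sign restriction on $t$. A change of variables then yields, distributionally on all of $(0,\infty)$,
\begin{equation*}
\Bigl(\bigl(\tfrac12|\U'|^2-W(\U)\bigr)e^{ax}\Bigr)'+\Bigl(\tfrac{2\si-a}{2}|\U'|^2+aW(\U)\Bigr)e^{ax}=0,
\end{equation*}
for every $a$ (obtained by testing with $\varphi=e^{(a-\si)x}\psi$); the case $a=0$ is \eqref{a1}, and the general case is the weighted identity you wanted to produce by multiplying \eqref{a1} by $e^{ax}$. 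Once this conservation law is in hand, the $C^{3/2}$ regularity of $\tfrac12|\U'|^2-W(\U)$ follows from $W(\U)\in C^{1/2}$ (Sobolev embedding of $H^1_{\rm loc}$ in one dimension), rather than from a global $C^1$ claim for $\U$ — which, as the paper's remark $\g(\si)=\tfrac{1}{2\si}(|\U'(0+)|^2-|\U'(0-)|^2)$ shows, is false across $x=0$ unless $\g(\si)=0$. With this repair, your parts (2) and (3) go through as written.
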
  
 \begin{proof}    (1).   Since $\U''+\si\U'-\na W(\U)=0$ in $(-\infty,0),$ taking the dot product  of this  equation with $\U'$, we see that the differential equation \eqref{a1} holds for $x< 0.$ To  show that it holds for $x> 0,$ we use the following variation technique. 
 
 Let $\varphi\in C_{0}^{\infty}((0,\infty);\r).$ Set $\varphi(x)=0$ for $x\le 0.$ There exists   $t_{0}\in (0,1)$ such that 
 $$1+t \varphi'(x)> 0,\quad x+t \varphi(x)\ge 0\quad {\rm for\,\,any}\,\,x\ge0\,\,{\rm and }\,\,t\in (-t_{0},t_{0}).$$ 
Set $\V^{t}(x)=\U(x+t\varphi(x))$.   Then  $\V^{t}\in \mathcal{A}.$  Using the substitution  $y=x+t\varphi(x)$, we compute 
 \bnn\ba   J(\si,\V^{t})&=\int_{\r}e^{\si x}\left(\frac{(1+t\varphi'(x))^{2}}{2}|\U'(x+t\varphi(x))|^{2}+W(\U(x+t\varphi(x)))\right){\rm d}x\\
 &=\int_{\r}e^{\si (y-t\varphi(x))}\left(\frac{1+t\varphi'(x)}{2}|\U'(y)|^{2}+\frac{W(\U(y))}{1+t\varphi'(x)}\right){\rm d}y.\ea \enn
   Hence, 
    \bnn\ba  0&=\frac{{\rm d}}{{\rm d}t} J(\si,\V^{t})\Big|_{t=0}\\
 &=\int_{\r}e^{\si y}\left[-\si \varphi(y)\left(\frac{|\U'(y)|^{2} }{2}+ W(\U(y)) \right)+\varphi'(y)\left(\frac{|\U'(y)|^{2} }{2} - W(\U(y)) \right)\right]{\rm d}y.\ea \enn
 Set $\varphi(x)=e^{(a-\si)x}\psi(x)$ with $\psi\in C_{0}^{\infty}((0,\infty);\r)$ and $\psi\equiv 0$ on $(-\infty,0]$.  We obtain
 \bnn 0=\int_{\r}e^{ax}\left[\left(\frac{|\U'(x)|^{2}}{2}-W(\U(x))\right)\psi'(x)-\left(\frac{2\si-a}{2}|\U'(x)|^{2}+aW(\U(x))\right)\psi(x)\right]{\rm d}x.\enn
 This implies that,  in the  distribution sense, 
  \be\la{a3}\ba&\left(  \left(\frac{|\U'(x)|^{2}}{2} -W(\U(x)) \right)e^{a x}\right)'+\left(\frac{2\si-a}{2}|\U'(x)|^{2} +aW(\U(x)) \right)e^{a x}=0.\ea\ee
  Thus, we obtain \eqref{a1} by taking $a=0$ in above equation. 

Set  $\xi=\frac{1}{2}|\U'|^{2} - W(\U).$ We see that $\xi'+2\si\xi=-2 \si W(\U)$ in $(0,+\infty).$
Since  $\U\in H_{{\rm loc}}^{1}$,  we have   $W(\U)\in C^{\frac{1}{2}}.$ This implies  $\xi\in C^{\frac{3}{2}}\left([0+,+\infty)\right).$ 

   (2). 
Observe from Lemma \ref{lem7.1} that,  as  $x\rightarrow+\infty$, $|\U'(x)|^{2} +W(\U(x))=O(1) e^{-2\lambda x}$ for some  $\lambda>\si$.   For $a\in (0,2\si),$ integrating  \eqref{a3} over $(0,+\infty)$ and dividing the resulting equation  by $a$,  we obtain    \be\la{a4}\ba& \int_{0}^{\infty}\left(\frac{2\si-a }{2a}|\U'(x)|^{2}+W(\U(x)) \right)e^{a x}{\rm d}x=\frac{|\U'(0+)|^{2}}{2a}.\ea\ee
In particular,  taking  $a=\si$, we obtain 
  \be\la{a13}\ba& \int_{0}^{\infty}\left(\frac{1}{2}|\U'(x)|^{2}+W(\U(x)) \right)e^{\si x}{\rm d}x=\frac{1}{2\si}|\U'(0+)|^{2}.\ea\ee
Taking the difference of the above two equations,  we obtain 
  \begin{align}\la{a5}& \int_{0}^{\infty}\left(\frac{1}{2}|\U'(x)|^{2}+W(\U(x)) \right)\left(e^{a x}-e^{\si x}\right){\rm d}x\nonumber\\ &\qquad=\frac{a-\si}{a\si}\left(-\frac{1}{2}|\U'(0+)|^{2}+ \si\int_{0}^{\infty}e^{a x}|\U'(x)|^{2}{\rm d}x\right).
 \end{align}
 Next,  integrating \eqref{a1}  over $(0,+\infty)$ leads to 
 \be\la{a15}\frac{1}{2}|\U'(0+)|^{2}=\si\int_{0}^{\infty}|\U'(x)|^{2}{\rm d}x.\ee
 Substituting this into \eqref{a5}, we  conclude \eqref{a22}.

 Finally, from  \eqref{a4} and \eqref{a15} we see that,  for $a\in (0,2\si),$
   \bnn\ba  
   \int_{0}^{\infty}e^{a x} |\U'(x)|^{2} {\rm d}x\le \frac{|\U'(0+)|^{2}}{2\si-a}
   =\frac{2\si}{2\si-a}   \int_{0}^{\infty} |\U'(x)|^{2} {\rm d}x.
   \ea\enn
   Hence, for $a\in [0,2\si)$,
     \bnn \ba  \int_{0}^{\infty}\left(e^{a x}-1\right)|\U'(x)|^{2}{\rm d}x\le \frac{a}{2\si-a} \int_{0}^{\infty} |\U'(x)|^{2} {\rm d}x\le  \frac{2a}{2\si-a}\left(\g(\si)+\frac{m}{\si}\right),
   \ea\enn
   which give the desired \eqref{a6} by substituting it into \eqref{a22}.

   (3).  Similarly,  using $\U''+\si\U'-\na W(\U)=0$  in $(-\infty,0)$, we see that \eqref{a3} holds for any $a>0$ and $x< 0.$  Hence, integrating \eqref{a3} over  $(-\infty,0)$  gives
      \bnn\la{a7}\ba& \int_{-\infty}^{0}\left(\frac{2\si-a}{2a}|\U'(x)|^{2} +W(\U(x)) \right)e^{a x}{\rm d}x=-\frac{|\U'(0-)|^{2}}{2a}.\ea\enn
      Setting $a=\si$ gives       
  \be\la{a14}\ba& \int_{-\infty}^{0}\left(\frac{|\U'(x)|^{2}}{2}+W(\U(x)) \right)e^{\si x}{\rm d}x=-\frac{1}{2\si}|\U'(0-)|^{2}.\ea\ee
 Taking the difference of the above two equations,  we obtain  \eqref{a8}.   
 
 Finally, integrating  \eqref{a1}  over $(-\infty,0)$  gives 
 \bnn\frac{|\U'(0-)|^{2}}{2}+\si \int_{-\infty}^{0}| \U'(x)|^{2} {\rm d}x=-\lim_{x\rightarrow-\infty}W(\u(x))\le m.\enn
 Substituting this into \eqref{a8}, we get \eqref{a9}.
\end{proof}

\begin{remark} It follows   from \eqref{a13} and \eqref{a14}  that  \bnn \g(\si)=J(\si,\U)=\frac{1}{2\si} \left(|\U'(0+)|^{2}-|\U'(0-)|^{2}\right).\enn
Thus, $\U'(0+)=\U'(0-)$ only if $\g(\si)=0.$
Later, we shall show that $\U$ solves \eqref{6.8}, as long as $\g(\si)=0.$ \end{remark}

\subsection{Monotonicity and Lipschitz Continuity of $\g$}

Now we are ready to prove the following 
  \begin{thm}\la{lem3.3}\la{th2.1}  Let    $\g(\si)$  be  as defined in \eqref{a23}. The following holds:
  \begin{enumerate}
\item  if $0<a<\si,$ then $\g(a)<\g(\si);$
  
 \item $\g(\cdot)$ is Lipschitz continuous in $(0,+\infty);$  
  
\item there exist the limits 
\bnn \lim_{\si\searrow0}\g(\si)=-\infty\quad {\rm and}\quad \lim_{\si\nearrow+\infty}\g(\si)=+\infty;\enn

\item there exists a unique $\si^{*}>0$ such that $\g(\si^{*})=0.$

\end{enumerate}
  \end{thm}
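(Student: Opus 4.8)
The plan is to derive the four assertions by assembling estimates already in hand, treating them in the order (3), (1), (2), (4). Part (3) follows directly from the two–sided bound \eqref{5.10} of Lemma \ref{lem3.2}: since $\mathbb D\neq\emptyset$ the constant $m=-W(\a)$ there is strictly positive, and since $\mathrm{dist}(\b,\Gamma)>0$ (Lemma \ref{lem3.2} and Remark \ref{rr}) the constant $d$ is strictly positive. As $\si\searrow0$ the upper bound in \eqref{5.10} equals $\big(\tfrac{|\b-\a|^{2}}2+M\big)\tfrac{e^{\si}-1}{\si}-\tfrac{m\,e^{-\si}}{\si}$, whose first term stays bounded while the second tends to $-\infty$; hence $\g(\si)\to-\infty$. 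As $\si\nearrow+\infty$ the lower bound $\tfrac{\si d^{2}}2-\tfrac m\si\to+\infty$; hence $\g(\si)\to+\infty$.

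For part (1) I would fix $0<a<\si$ and take a minimizer $\U$ of $J(\si,\cdot)$ in $\mathcal A$. Since $\mathcal A$ does not depend on the speed, $\g(a)\le J(a,\U)$, so it suffices to show $J(a,\U)-J(\si,\U)\le0$ with equality impossible. Split $J(a,\U)-J(\si,\U)=\int_{\r}(e^{ax}-e^{\si x})\big(\tfrac12|\U'|^{2}+W(\U)\big)\,{\rm d}x$ at $x=0$: on $(-\infty,0)$ the identity \eqref{a8} rewrites the contribution as $\tfrac{a-\si}{a\si}\big(\tfrac12|\U'(0-)|^{2}+\si\int_{-\infty}^{0}e^{ax}|\U'|^{2}{\rm d}x\big)$, and on $(0,\infty)$ the identity \eqref{a22} (legitimate since $a\in(0,2\si)$) rewrites it as $\tfrac{a-\si}{a}\int_{0}^{\infty}(e^{ax}-1)|\U'|^{2}{\rm d}x$; both are $\le0$ because $a-\si<0$ and the remaining factors are non-negative. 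Thus $\g(a)\le J(a,\U)\le\g(\si)$. If $\g(a)=\g(\si)$, every inequality is an equality, so $\int_{0}^{\infty}(e^{ax}-1)|\U'|^{2}{\rm d}x=0$, which forces $\U'\equiv0$ on $(0,\infty)$, hence $\U\equiv\b$ on $(0,\infty)$ (by $\U(+\infty)=\b$) and $\b=\U(0)\in\Gamma$, contradicting $\mathrm{dist}(\b,\Gamma)>0$. Therefore $\g(a)<\g(\si)$.

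For part (2) I would use the same decomposition to get a quantitative bound. Combining \eqref{a8} and \eqref{a22} with the estimates proved inside Lemma \ref{lem7.2}, namely $\tfrac12|\U'(0-)|^{2}+\si\int_{-\infty}^{0}|\U'|^{2}{\rm d}x\le m$ and $\int_{0}^{\infty}(e^{ax}-1)|\U'|^{2}{\rm d}x\le\tfrac{2a}{2\si-a}\big(\g(\si)+\tfrac m\si\big)$, one finds that, for a minimizer $\U$ of $J(\si,\cdot)$ and any $a\in(0,2\si)$,
\[
|J(a,\U)-J(\si,\U)|\ \le\ |a-\si|\Big(\frac{m}{a\si}+\frac{2}{2\si-a}\big(\g(\si)+\tfrac m\si\big)\Big).
\]
Since $\g(a)\le J(a,\U)$ this bounds $\g(a)-\g(\si)$ from above, and exchanging the roles of $a$ and $\si$ (using a minimizer of $J(a,\cdot)$, which is allowed when $\si<2a$) bounds $\g(\si)-\g(a)$. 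Fixing a compact $[\si_{1},\si_{2}]\subset(0,+\infty)$, for $a,\si\in[\si_{1},\si_{2}]$ with $|a-\si|\le\si_{1}/4$ one has $a\si\ge\tfrac34\si_{1}^{2}$, $2\si-a\ge\tfrac12\si_{1}$, $2a-\si\ge\tfrac12\si_{1}$, while $\g(\si),\g(a)$ are bounded above by \eqref{5.10}; hence $|\g(a)-\g(\si)|\le L|a-\si|$ with $L=L(\si_{1},\si_{2})$, and chaining through finitely many intermediate points extends this to all of $[\si_{1},\si_{2}]$. So $\g$ is locally Lipschitz, in particular continuous. Part (4) is then immediate: $\g<0$ near $0$ and $\g>0$ near $+\infty$ by part (3), $\g$ is continuous by part (2), so the intermediate value theorem produces $\si^{*}>0$ with $\g(\si^{*})=0$; $\g$ is strictly increasing by part (1), so $\si^{*}$ is unique (and Corollary \ref{cor1} localizes it).

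The analytic substance has already been packaged into Lemma \ref{lem7.2}, so the theorem is largely bookkeeping; the one point requiring genuine care is part (2), where the factors $\tfrac1{2\si-a}$ and $\tfrac1{2a-\si}$ produced by the energy identities degenerate as $a\to2\si$ or $\si\to2a$. This forces one to establish the Lipschitz estimate only near the diagonal $a=\si$ first — invoking \eqref{5.10} to keep $\g$ bounded above on compact sets — and then patch it together by a chaining argument. The strictness in part (1) likewise hinges on the purely geometric fact $\mathrm{dist}(\b,\Gamma)>0$ supplied by assumption {\bf (A)}.
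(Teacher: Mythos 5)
Your proof is correct and follows essentially the same route as the paper: the decomposition \eqref{a10} combined with the identities and bounds \eqref{a22}, \eqref{a6}, \eqref{a8}, \eqref{a9} of Lemma \ref{lem7.2} for parts (1)--(2), the two-sided bound \eqref{5.10} for part (3), and monotonicity plus the intermediate value theorem for part (4). Your treatment is in fact slightly more careful than the paper's on two points it leaves implicit --- the strictness in (1) via $\U(0)\in\Gamma$ versus ${\rm dist}(\b,\Gamma)>0$, and the chaining needed in (2) because the factor $\frac{1}{2\si-a}$ degenerates as $a\to 2\si$.
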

  
\begin{proof}  Let $a$ and $\si$ be positive constants. 
Let $\U$ be a minimizer of $J(\si,\cdot)$ in $\mathcal{A}$. Then,
\bnn \g(a)=\inf_{\mathcal{A}}J(a,\cdot)\le J(a,\U)=\g(\si)-J(\si,\U)+J(a,\U).\enn
Using the definition of $J$, we obtain 
\be\la{a10}\ba \g(a)-\g(\si)\le  \int_{\r}\left(e^{ax}-e^{\si x}\right)\left(\frac{1}{2}|\U'(x)|^{2}+W(\U(x)) \right){\rm d}x.\ea\ee

(1). When $0<a<\si,$   it follows  from \eqref{a10}, \eqref{a22},  and \eqref{a8} that 
$
 \g(a)-\g(\si)<0.$

(2). 
Assume that $a\in (\si,2\si)$. 
Then,  by \eqref{a6}, \eqref{a9} and \eqref{a10},  we derive that 
\bnn\ba 0<\g(a)-\g(\si)
&\le \frac{a-\si}{2\si-a}\left(\g(\si)+\frac{m}{\si}\right)+\frac{a-\si}{a\si}   m.\ea\enn
Thus, $\g(\cdot)$ is Lipschitz continuous on $(0,+\infty).$

(3).  We obtain the limits  from \eqref{5.10} in Lemma \ref{lem3.2}.

(4). It directly follows from  (3), and  the monotonicity  and continuity of $\g(\cdot)$.

The proof of Theorem \ref{th2.1}  is completed.
 \end{proof}
  
\section{Proof of Theorem \ref{thm}}

By  Theorem \ref{th2.1},  there exists a unique $\si^{*}>0$ such that $\g(\si^{*})=0.$  Let   $\U$ be a minimizer of $J(\si^{*},\cdot)$ in $\mathcal{A}.$   Let $\varphi\in C_{0}^{\infty}(\r;\rr)$ be  arbitrary.  For every $t\in (-1,1)$ we define  $z(t):=\max\{z\in \r\,:\, \U(z)+t\varphi(z)\in \Ga\}.$ 
Set  $\u^{t}(x)=\u(x+z(t))+t \varphi(x+z(t))$.  Then,   $\u^{t}(0)\in \Ga$  and  $W(\u^{t}(x))\ge 0$ for all $x\ge 0.$ Thus  $\u^{t}\in \mathcal{A}.$ 
Simple computation gives 
\bnn\ba J(\si^{*},\,\u^{t})&=\int_{\r} e^{\si^{*}x}\left(\frac{1}{2}|(\u +t\varphi)|^{2}+W((\u +t\varphi))\right)(x+z(t)){\rm d}x\\
&= e^{-\si^{*}z(t)}\int_{\r} e^{\si^{*}x}\left(\frac{1}{2}|(\u +t \varphi)'|^{2}+W((\u +t\varphi))\right)(x){\rm d}x\\
&=e^{-\si^{*}z(t)} J(\si^{*},\,\u+t\varphi).\ea\enn 
Notice that $$e^{-\si^{*}z(t)} J(\si^{*},\,\u+t\varphi)=J(\si^{*},\u^{t})\ge J(\si^{*},\u)=0.$$
Hence, \bnn\la{6.9}\ba 0&\le\lim_{t\rightarrow0} \frac{ e^{\si^{*}z(t)} J(\si^{*},\,\u^{t}) }{|t|}=\lim_{t\rightarrow0}\frac{J(\si^{*},\,\u+t \varphi)-J(\si^{*},\,\u)}{|t|}.\ea\enn
Thus,
\bnn\la{6.10}\ba 0 &= \lim_{t\rightarrow0}\frac{J(\si^{*},\,\u+t \varphi)-J(\si^{*},\,\u)}{t}=:\left\langle \frac{\de J}{\de \u},\,\varphi\right\rangle\\
&=\int_{\r} e^{\si^{*}x}\left( \u' \cdot\varphi'+\na W(\u)\cdot\varphi\right){\rm d}x\quad \forall \varphi\in C_{0}^{\infty}(\r;\rr).
\ea\enn
This implies that $\U$  is a weak solution of  $\left(e^{\si^{*}x} \u'\right)'+e^{\si^{*}x}\na W(\u)=0$ in $\r$.  By a standard regularity theory we see  that $\U$ is a classical solution of   $\si\U'+\u''-\na W(\u)=0$ in $\r.$
 
Finally,  if  $\mathbb{E}=\{\a\}$, then  we conclude  from \eqref{4.19} that 
\bnn \lim_{x\rightarrow-\infty}\U(x)=\a.\enn   
This completes the proof of Theorem \ref{thm}.

\bigskip

\begin{thm}\la{thm4.1} The speed $\si^{*}$ of the positive root of $\g(\cdot)=0$  is the largest wave speed $\si$, among all solutions of  \eqref{6.8}.  Also, it satisfies \eqref{cm1}.  \end{thm}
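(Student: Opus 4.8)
The plan is to combine Theorem~\ref{thm} and Corollary~\ref{cor1} with a comparison argument. The existence statement in Theorem~\ref{thm} already produces a solution of \eqref{6.8} of speed $\si^{*}$, and the two--sided bound on $\si^{*}$ is precisely \eqref{cm1} of Corollary~\ref{cor1}; so it only remains to show that \emph{every} solution $(\si,\u)$ of \eqref{6.8} with $\si>0$ obeys $\si\le\si^{*}$. I would do this by constructing from $\u$, after a translation, a function $\widetilde{\u}\in\mathcal{A}$ with $J(\si,\widetilde{\u})=0$; then $\g(\si)=\inf_{\mathcal{A}}J(\si,\cdot)\le 0=\g(\si^{*})$, and since $\g$ is strictly increasing on $(0,\infty)$ by Theorem~\ref{th2.1}(1), this forces $\si\le\si^{*}$.

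To build $\widetilde{\u}$, first I would locate the ``last crossing'' of $\Gamma$. Since $W(\u(x))\to w<0$ as $x\to-\infty$, we have $\u(x)\in\mathbb{D}$ for $x$ very negative; on the other hand $\u(x)\to\b$ as $x\to+\infty$, and by Remark~\ref{rr} (which is where assumption $({\bf A})$ enters) $W\ge0$ near $\b$ and ${\rm dist}(\b,\Gamma)\ge\de>0$, so $\u(x)$ lies outside $\overline{\mathbb{D}}$ for all large $x$. By connectedness of $\r$ the set $\{x:\u(x)\in\Gamma\}$ is nonempty and bounded above, whence $x_{0}:=\sup\{x:\u(x)\in\Gamma\}$ is finite, and $\u(x_{0})\in\Gamma$ since $\Gamma$ is closed. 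A second connectedness argument gives $\u(x)\notin\mathbb{D}$, i.e. $W(\u(x))\ge0$, for all $x>x_{0}$ (else $\u$ would cross $\Gamma$ again beyond $x_{0}$). Therefore $\widetilde{\u}(x):=\u(x+x_{0})$ satisfies $\widetilde{\u}(0)\in\Gamma$, $W(\widetilde{\u})\ge0$ on $(0,\infty)$ and $\widetilde{\u}(+\infty)=\b$, so $\widetilde{\u}\in\mathcal{A}$. I would also record that $\widetilde{\u}$ solves $\si\widetilde{\u}'+\widetilde{\u}''=\na W(\widetilde{\u})$ on all of $\r$, hence inherits the asymptotics of Section~3: $|\widetilde{\u}'|+|\widetilde{\u}''|+|\widetilde{\u}-\b|=O(1)e^{-\lambda x}$ with $\lambda>\si$ as $x\to+\infty$ (the proof of Lemma~\ref{lem7.1} uses only the equation and the limit at $+\infty$, not minimality), and $\widetilde{\u}$ is bounded with $|\widetilde{\u}'(x)|\to0$ as $x\to-\infty$ (by the blow--up argument of Lemma~\ref{lem4.3}, which uses only the equation and $W(\u(\cdot))\to w$). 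These estimates make $J(\si,\widetilde{\u})$ finite and justify the integrations by parts below.

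Finally I would evaluate $J(\si,\widetilde{\u})$ exactly, reprising Lemma~\ref{lem7.2}. Identity \eqref{a3} with $a=\si$ holds on all of $\r$; integrating it over $(0,\infty)$ and over $(-\infty,0)$ separately, using the decay just recorded and $W(\widetilde{\u}(0))=0$, yields the jump--free analogues of \eqref{a13} and \eqref{a14}, namely $\int_{0}^{\infty}e^{\si x}\bigl(\tfrac12|\widetilde{\u}'|^{2}+W(\widetilde{\u})\bigr)\,{\rm d}x=\tfrac{1}{2\si}|\widetilde{\u}'(0)|^{2}$ and $\int_{-\infty}^{0}e^{\si x}\bigl(\tfrac12|\widetilde{\u}'|^{2}+W(\widetilde{\u})\bigr)\,{\rm d}x=-\tfrac{1}{2\si}|\widetilde{\u}'(0)|^{2}$; adding, $J(\si,\widetilde{\u})=0$, which closes the argument. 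I expect the delicate point to be not this energy computation --- it is routine once the right identity is invoked --- but the preceding step: making sure a last crossing of $\Gamma$ exists and is finite, that $W(\u)\ge0$ persists for all $x>x_{0}$ (a connectedness argument relying on $\b$ being separated from $\overline{\mathbb{D}}$), and checking that the decay and boundedness results of Section~3 were proved using only the differential equation, so that they carry over to an arbitrary solution of \eqref{6.8} and not merely to minimizers.
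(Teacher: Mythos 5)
Your proposal is correct and follows essentially the same route as the paper: translate the given wave into the admissible class, show its energy at its own speed is zero via the first integral $\bigl[e^{\si x}(\tfrac12|\U'|^{2}-W(\U))\bigr]'+\si e^{\si x}(\tfrac12|\U'|^{2}+W(\U))=0$, and then invoke the strict monotonicity of $\g$ together with $\g(\si^{*})=0$ and Corollary \ref{cor1}. You are in fact somewhat more careful than the paper, which computes $J(a,\U)=\frac{a-\si}{a}\int_{\r}e^{ax}|\U'|^{2}\,{\rm d}x$ for $a\in(0,2\si)$ but leaves implicit both the translation placing $\U$ in $\mathcal{A}$ (your ``last crossing'' of $\Gamma$) and the fact that the decay estimate \eqref{6.24} and the boundedness near $-\infty$ depend only on the equation and not on minimality.
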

\begin{proof}   Suppose $(\si,\U)$ is a solution of \eqref{6.8}.  
 Let $a\in (0,2c).$      By  \eqref{6.24}  and integration by parts,   we obtain 
\bnn\la{d1}\ba J(a,\U)&=\int_{\r}\left(\frac{1}{2}|\U'(x)|^{2}+W(\U(x))\right){\rm d}\frac{e^{ax}}{a}=-\int_{\r}\frac{e^{ax}}{a}\left(\U'\cdot\U''+\na W(\U)\cdot\U'\right){\rm d}x.\ea\enn
Substituting $\na W(\u)$ by $\si\U'+\U''$, and using integration by parts,  we  obtain 
\bnn\ba J(a,\U)&
=\frac{a-\si}{a}\int_{\r}e^{ax} |\U'|^{2} {\rm d}x\qquad \forall a\in (0,2c).\ea\enn
Hence, $\g(a)=\inf_{\mathcal{A}}J(a,\cdot)< 0$  if  $a\in (0,\si)$. Also,  $\g(\si)\le J(\si,\u)=0$.   Finally,  by the monotonicity of $\g(\cdot)$, we conclude $\si\le \si^{*}.$ Therefore, $\si^{*}$ is the largest speed.
 \end{proof}

 \section{An Example}
  We consider the two dimensional case for  an illustration.  
  
Let  $f(s,\si)$ for $s\in \r$ and $\si\in\r$ be defined by 
  \bnn  f(s,\si)=(s^{2}-1)(2s-\si).\enn
  We  define,   for   $\alpha\in (0,2)$ and $\beta\in [\alpha,2)$, 
 \be\la{b6}\U=(u,v),\quad  W(\U)=\int_{1}^{u}f(s,\alpha){\rm d}s+\int_{1}^{v} f(s,\beta){\rm d}s.\ee
 It easy to check that the $W$ in  \eqref{b6} satisfies the assumption ${\bf (A)}$ and admits  four {\it wells}, given by 
 \bnn \b=(1,1),\quad \a_{1}=(-1,1),\quad \a_{2}=(1,-1),\quad \a_{3}=(-1,-1).\enn
 The depths of the wells satisfy 
 $0=W(\b)>W(\a_{1})\ge W(\a_{2})>W(\a_{3});$
  the rest critical points, e.g., $(\frac{\alpha}{2},\frac{\beta}{2}),$ are non-local minima of $W$. 
The system of  traveling wave equations can be written as 
 \be\la{d6}\left\{\ba &\si u'+u''=(u^{2}-1)(2u-\alpha)\quad {\rm in}\,\,\r,\\
 &\si v'+v''=(v^{2}-1)(2v-\beta)\quad \,\,{\rm in}\,\,\r,\\
 &u(+\infty)=1,\,\,v(+\infty)=1.\ea\right.\ee
We have the following assertions:

\begin{itemize}
\item The case of  $0<\alpha<\beta<2.$ 

There exists a traveling wave $(\si, \U)$, unique up to a translation, that  connects $\a_{1}$ to $\b.$ In particular, the general solution is  given by 
\bnn \si=\alpha\quad {\rm and} \quad \U_{1}(x)=(u(x),v(x))=({\rm tanh} (x+x_{0}),\,1),\enn
where $x_{0}\in \r$ is an arbitrary constant.

There exists a traveling wave $(\si, \U)$, unique up to a translation, that  connects  $\a_{2}$ to $\b$ (or  $\a_{3}$ to $\a_{1}$).   The general solution is  given by 
\bnn\ba \si=\beta\quad& {\rm and} \quad \U_{2}(x)=(u(x),v(x))=(1,\,{\rm tanh}  (x+x_{0}))\\
&\quad ({\rm or}\quad  \U_{3}(x)=(u(x),v(x))=(-1,\,{\rm tanh}  (x+x_{0}))).\ea\enn

There {\bf does not} exist a  traveling wave   connecting $\a_{3}$ to $\b.$    Indeed,  if $(c,\,(u,v))$ is a traveling wave connecting  $\a_{3}$ to $\b,$ then $u$ solves the first equation in \eqref{d6} with  ``boundary condition'' $u(\pm\infty)=\pm1.$  So, $\si=\alpha$ and 
$u(x)=\tanh (x+x_{0})$ for some  $x_{0}\in \r;$ see the   reference \cite{fm} for the details.   Similarly,   $v$ solves the second  equation in \eqref{d6} with ``boundary condition''  $v(\pm\infty)=\pm1.$ This implies that  $\si=\beta$ and 
$v(x)=\tanh (x+y_{0})$ for some  $y_{0}\in \r.$   However, since  $\alpha\neq\beta$, we see that there is no traveling waves  connecting  $\a_{3}$ to $\b.$

\item The case of  $0<\alpha=\beta<2.$ 

There exists a unique speed and infinitely many  traveling wave trajectories   that    connect  $\a_{3}$ to $\b.$ In particular, the general solution is  given by, for any $x_{0},\,y_{0}\in \r,$ 
\bnn \si=\alpha\quad {\rm and} \quad \U(x)=\left({\rm tanh} (x+x_{0}),\,{\rm tanh} (x+y_{0})\right).\enn
\end{itemize}

\bigskip

\begin{remark} By solving the decoupled  system \eqref{d6},   we see that $\si^{*}=\beta$. In particular,  when $0<\alpha<\beta<2,$
\bnn 0=\g(\si^{*})=\g(\beta)=J(\beta,\U_{2})=J(\alpha,\U_{1})>\inf_{\mathcal{A}}J(\alpha,\cdot)=\g(\alpha).\enn
\end{remark}
\begin{remark}  In the scalar case, there is the celebrated result of Fife-McLeod \cite{fm}, which states that if there exists a traveling wave of speed $\si_{1}>0$ connecting $\a_{1}$  to $\b$ and a traveling wave of speed $\si_{31}>\si_{1}$ connecting $\a_{3}$ to $\a_{1}$, then there exists a traveling wave connecting $\a_{3}$ to $\b$.  However,  the above example shows that such a  conclusion is not true for vector valued systems.   \end{remark} 

\bigskip

{\bf Acknowledgements.} The work is supported by 
 Sichuan Science and Technology Program (Grant No.2023ZYD0003).

\begin {thebibliography} {99}

 
 \bibitem{abc} N.  Alikakos, P.  Bates, \& X. Chen, {\it Traveling waves in a time periodic structure and a singular perturbation problem}, Trans. AMS,  {\bf 351} (1999) 2777-2805.

\bibitem{abc1}  N.  Alikakos,  S. Betel\'{u} \& X.  Chen, {\it Explicit stationary solutions in multiple well dynamics and non-uniqueness of interfacial energy densities}, European J. Appl. Math. {\bf 17} (2006), 525–556.

 \bibitem{bn} H. Berestycki, \& L. Nirenberg, {\it Travelling fronts in cylinders},  Ann. Inst. H. Poincaré  Anal. Non. linéaire {\bf 9}(5), (1992), 497-572.

\bibitem{cz} C-N. Chen, \&V. Zelati, {\it Traveling wave solutions to the Allen–Cahn equation},  Ann. Inst. H. Poincaré Anal. Non Linéaire {\bf 39} (4), (2022),   905–926.

\bibitem{chen2} X.  Chen, {\it Existence, uniqueness, and asymptotic stability of travelling waves in non-local evolution equations}, Adv. Diff. Eqns. {\bf 2} (1997), 125-160.

\bibitem{cg} X.  Chen, \& J. Guo,
{\it Existence and Asymptotic Stability of Traveling Waves of Discrete Quasilinear Monostable Equations},
J.  Diff. Eqns.,  {\bf 184} (2),
(2002),  549-569.

\bibitem{cqz}  X.  Chen, Y. Qi, \& Y.  Zhang, 
{\it Existence of traveling waves of auto-catalytic systems with decay},  J. Diff. Eqns. {\bf 260}(11), (2016),  7982-7999. 

\bibitem{evans} L.  Evans, Partial differential equations. In Graduate Studies in Mathematics, Vol. {\bf 19}, 2nd edn, (Providence,RI: American Mathematical Society,2010).
 
 \bibitem{fsv} B. Fiedler, A. Scheel, \& M Vishik, {\it Large patterns of elliptic systems in infinite cylinders,}  J. Math. Pures Appl. {\bf 77}, (1998) 879–907.

 \bibitem{fm} P.  Fife \& B. McLeod, {\it The approach of solutions of nonlinear diffusion equation to traveling front solutions},  Arch. Rat. Mech. Anal. {\bf 65} (1977), 355-361.
 
  \bibitem{hart}  P.  Hartman,  Ordinary Differential Equations.  Society for Industrial and Applied Mathematics, 2002.

 \bibitem{hh} P. Hohenberg, \& B.  Halperin, {\it Theory of dynamic critical phenomena},  Rev. Mod. Phys. {\bf 49}, (1977), 435–479.

 \bibitem{ho} Y. Hosono, {\it Traveling wave solutions for some density dependent diffusion equations}, Japan J. Appl. Math. {\bf 3} (1986), 163-196.
 
 \bibitem{lmn}  M. Lucia, C. Muratov, \& M. Novaga,  {\it Linear vs. nonlinear selection for the propagation speed of the solutions of scalar reaction-diffusion equations invading an unstable equilibrium}, Comm. Pure Appl. Math. {\bf 57} (5) (2004), 616-636.
 
  \bibitem{lmn1}M. Lucia, C. Muratov, \& M. Novaga, {\it Existence of traveling waves of in vasion for Ginzburg-Landau-type problems in infinite cylinders}, Arch. Ration. Mech. Anal. {\bf 188} (3) (2008),  475–508. 
  
    \bibitem{mc}  H.  McKean, {\it Nagumo’s equation}, Adv. Math. {\bf 4} (1970), 209-223.

 \bibitem{m} A. Mielke,  {\it  Essential manifolds for an elliptic problem in an infinite strip,}   J. Differe. Equat. {\bf 110}, (1994), 322–355.

 \bibitem{mu}  C. Muratov,  {\it A global variational structure and propagation of disturbances in reaction-diffusion systems of gradient type,}   Discrete Cont. Dyn. S., Ser. B {\bf 4}, (2004),  867-892.

  \bibitem{Wu} Y. Wu, \& Y.   Zhao,  {\it The existence and stability of traveling waves with transition layers for the S-K-T competition model with cross-diffusion,}   Sci. China Math. {\bf 53}, (2010), 1161–1184.
 \end {thebibliography}
 
\end{document}